\begin{document}
\newcommand{\comments}[1]{\marginpar{\footnotesize #1}} 

\newtheorem{proposition}{Proposition}[section]
\newtheorem{lemma}[proposition]{Lemma}
\newtheorem{sublemma}[proposition]{Sublemma}
\newtheorem{theorem}[proposition]{Theorem}

\newtheorem{maintheorem}{Main Theorem}
\newtheorem{corollary}[proposition]{Corollary}

\newtheorem{ex}[proposition]{Example}

\theoremstyle{remark}

\newtheorem{remark}[proposition]{Remark}

\theoremstyle{definition}
\newtheorem{definition}[proposition]{Definition}
\newcommand{\ovB}{\bar{B}}
\def\Erg{\mathrm{Erg}\, }
\def\cone{\mathbf{C}} 
\def\real{\mathbb{R}}
\def\sphere{\mathbf{S}^{d-1}}
\def\integer{\mathbb{Z}}
\def\complex{\mathbb{C}}
\def\BBB{\mathbb{B}}
\def\supp{\mathrm{supp}}
\def\var{\mathrm{var}}
\def\sgn{\mathrm{sgn}}
\def\sp{\mathrm{sp}}
\def\id{\mathrm{id}}
\def\Imm{\mathrm{Image}}
\def\cc{\Subset}
\def\D{\mathrm {d}}
\def\I{i}
\def\E{e}
\def\Lip{\mathrm{Lip}}
\def\BB{\mathcal{B}}
\def\CC{\mathcal{C}}
\def\DD{\mathcal{D}}
\def\EE{\mathcal{E}}
\def\FF{\mathcal{F}}
\def\GG{\mathcal{G}}
\def\II{\mathcal{I}}
\def\JJ{\mathcal{J}}
\def\KK{\mathcal{K}}
\def\LL{\mathcal{L}}
\def\LLL{\mathbb{L}}
\def\MM{\mathcal{M}}
\def\NN{\mathcal{N}}
\def \OO{\mathcal {O}}
\def \PP{\mathcal {P}}
\def \QQ{\mathcal {Q}}
\def \RR{\mathcal {R}}
\def\SS{\mathcal{S}}
\def\TT{\mathcal{T}}
\def\UU{\mathcal{U}}
\def\VV{\mathcal{V}}
\def\YY{\mathcal{Y}}
\def\ZZ{\mathcal{Z}}
\def\FFF{\mathbb{F}}
\def\PPP{\mathbb{P}}

\title[Characteristic functions as bounded multipliers]{Characteristic functions as bounded multipliers on anisotropic spaces}

\author{Viviane Baladi}
\address{Sorbonne Universit\'e, CNRS, IMJ-PRG,  4, Place Jussieu, 75005 Paris, France}
\email{viviane.baladi@imj-prg.fr}

\date{Revised version, January 10, 2018} 
\begin{abstract}
We   show that characteristic functions of domains with piecewise $C^3$
boundaries  transversal to suitable cones are bounded multipliers on 
a recently introduced   scale $\UU^{\cone, t,s}_p$ 
of anisotropic Banach spaces, 
under the  conditions $-1+1/p<s<-t<0$, with $p\in (1,\infty)$.
\end{abstract}
\subjclass[2010]{Primary 37C30; Secondary 37D20, 37D50, 46F10}
\thanks{I thank D. Terhesiu for many useful comments and encouragements, in particular
during her three-months stay in Paris in 2016. I am very grateful to the anonymous referee for thoughtful
remarks, including the observation that the norms only depend on the ``unstable'' cones $\cone_+$.
I thank Malo J\'ez\'equel for very sharp questions which helped me to improve the text.}


\maketitle

\section{Introduction}

A (not necessarily smooth) function $g:M\to \complex$ is called a bounded multiplier on a Banach space $\BB$ of distributions
on a $d$-dimensional Riemann manifold $M$ if there exists  $C_g<\infty$ so that for all
$\varphi \in \BB$ the product $g\varphi$ is a well-defined element of
$\BB$ and, in addition,   $\|g \cdot \varphi\|\le C_g \|\varphi\|$, where $\|\cdot\|$ is the norm of $\BB$. One interesting special case
is when $g$ is the characteristic function $1_\Lambda$ of an open domain
 $\Lambda\subset M$: Half a century ago, Strichartz \cite{Str} proved
that for any $d\ge 1$,
if $M=\real^d$ and $\BB$ is the Sobolev\footnote{Recall that
$\|\varphi\|_{H^t_p}= \|(\id+\Delta)^{t/2} \varphi\|_{L_p}=\|\FFF^{-1} (1+|\xi|^2)^{t/2}
\FFF \varphi \|_{L_p}$, with $\Delta$ the Laplacian
and $\FFF$ the Fourier transform.} space
$H^t_p(\real^d)$ for $p\in (1,\infty)$ and $t\in \real$,  then the characteristic function
$1_\Lambda$ of a half-space is a bounded multiplier on $H^t_p(\real^d)$ if and only if $-1+1/p<t<1/p$.

In the present work, we consider a newly introduced scale $\UU^{\cone, t,s}_p$ of
spaces  of anisotropic distributions $\BB$ on a  manifold $M$, adapted to  smooth hyperbolic dynamics, and we prove the bounded multiplier 
property for characteristic functions of suitable subsets $\Lambda\subset M$.

Fix $r>1$, and suppose from now on that $M$ is connected and compact.
The simplest hyperbolic maps on $M$ are transitive $C^r$ Anosov diffeomorphisms $T$.
The Ruelle transfer operator
associated to  such a map $T$ and to a  $C^{r-1}$ function $h$ on $M$ 
(for example, $h=1/|\det DT|$) is defined on
$C^{r-1}$ functions $\varphi$ by
\begin{equation}\label{leqa}
\LL_h \varphi= (h \cdot \varphi ) \circ T^{-1} \, .
\end{equation}
Blank--Keller--Liverani \cite{BKL} were the first to study
the spectrum of such transfer operators on a suitable Banach space $\BB$ of {\it anisotropic  distributions}  and to exploit this spectrum to get information on
the Sinai--Ruelle--Bowen (physical) measure: The spectral radius 
of $\LL_{1/|\det DT|}$ is equal to $1$, there is a simple positive
maximal eigenvalue, whose eigenvector is in fact a Radon measure
$\mu$, which is just the physical measure  of $T$. Finally,
the rest of the spectrum lies in a disc of radius strictly
smaller than $1$,
  which implies exponential decay of correlations
$\int \varphi (\psi \circ T^{n}) \D \mu-\int \varphi \D \mu \int \psi \D \mu$
for H\"older observables $\psi$ and $\varphi$ as $n\to \infty$. (The first step 
in this analysis is to show the bound $\rho_{ess}<1$ for the
essential spectral radius of $\LL_{1/|\det DT|}$ on $\BB$.)

Some natural  dynamical systems originating from physics (such as Sinai billiards) enjoy
uniform hyperbolicity, but are only {\it piecewise smooth.} Letting $M=\cup_i \Lambda_i$ be
a (finite or countable) partition of $M$ into domains where the dynamics is smooth, one 
can often reduce to the
 smooth hyperbolic case via the decomposition
\begin{equation}\label{leqapw}
\LL_{1/|\det DT|} \varphi= \sum_i  \frac{ (1_{\Lambda_i}\cdot \varphi) }{|\det DT|}
 \circ T^{-1} \, .
\end{equation}
This  motivates studying bounded
multiplier properties of characteristic functions.

\smallskip
 
In the 15 years since the publication of \cite{BKL}, 
dynamicists and semi-classical analysts have
created a rich jungle of spaces of anisotropic distributions for hyperbolic
dynamics (here, $d=d_s+d_u$ with $d_s\ge 1$ and
$d_u\ge 1$). These spaces are usually
scaled by two real numbers $v<0$ and $t>0$. 
Leaving aside  
the 
classical foliated anisotropic spaces of Triebel \cite{Tr} (which are limited
to ``bunched'' cases \cite{BG2}, and seem to fail for Sinai
billiards), they come in two groups: 

In the first, ``geometric''
group \cite{BKL, GL1},  a class of $d_s$-dimensional ``admissible''
leaves $\Gamma$ (having tangent vectors in stable cones for $T$)
is introduced, and the norm of $\varphi$  is obtained by fixing
an integer $t\ge 1$ and taking a supremum, over
all admissible leaves $\Gamma$, of the 
  partial derivatives of $\varphi$ of total order  at most $t$, integrated  against
$C^{|v|}$ test functions on $\Gamma$.
Modifications of this space,  for suitable noninteger  $0<t<1$ and $|v|<1$, were introduced to work with
piecewise smooth systems \cite{DL,DZ}
(only in dimension two). A version of these spaces for piecewise smooth hyperbolic flows
in dimension three recently allowed to prove exponential mixing for Sinai 
billiard flows \cite{BDL}. 

In the\footnote{This group could also be called pseudodifferential, or semi-classical, or Sobolev.} second, ``microlocal,'' group 
\cite{BT1}, a third parameter $p \in [1,\infty)$ is present, and
the norm (in charts) of $\varphi$ is the $L_p$ average   of $\Delta^{t,v}(\varphi)$,
where the operator $\Delta^{t,v}$ interpolates smoothly 
between $(\id +\Delta)^{v/2}$  in {\it stable cones} in the cotangent space, and $(\id +\Delta)^{t/2}$  in {\it unstable cones}
in the cotangent space.
Powerful tools are available for this microlocal  approach, 
allowing in particular to study the dynamical determinants and zeta 
functions\footnote{The ``kneading determinants'' of
by Milnor and Thurston from the 70's are revisited as ``nuclear
decompositions'' in \cite{Ba}.}
much more efficiently than for the geometric spaces. Variants of these microlocal
spaces (usually in the Hilbert setting $p=2$) have also been studied by the semi-classical community,
starting from \cite{FRS}.  However, S. Gou\"ezel pointed
out over ten years ago that {\it characteristic functions cannot be bounded
multipliers} on spaces defined by conical wave front sets
as in \cite{BT1} or \cite{FRS} (Gou\"ezel's counterexamples
are presented in  \cite[App. 1]{Baladijoel}). The microlocal spaces of the type defined in \cite{BT1, BT2} or \cite{FRS}
thus appear {\it unsuitable}
to study piecewise smooth dynamics. 

In order to overcome this limitation of the microlocal
approach, we recently introduced \cite{Baladijoel} a new scale $\UU^{\cone, t,s}_p$ of microlocal anisotropic spaces,
obtained by mimicking the construction of  the  geometric spaces 
of Gou\"ezel--Liverani \cite{GL1} (with, morally, $s=v+t$).
We showed in \cite{Baladijoel} the expected bound on the essential
spectral radius of the transfer operator
of a $C^r$ Anosov diffeomorphism acting on  $\UU^{\cone, t,s}_p$
(if $t-(r-1)<s<-t<0$),
and we conjectured  that characteristic functions
of  domains with piecewise smooth boundaries
everywhere transversal to the stable cones should be bounded multipliers on  $\UU^{\cone, t,s}_p$, if $s$ and $t$
satisfy additional constraints depending on $p\in (0,1)$. {\it The main result\footnote{See Remark~\ref{beware}.} of
the present paper, Theorem~\ref{propp}, implies this bounded multiplier
property if $\max \{t-(r-1), -1+1/p\}<s<-t<0$.}


This result opens the door to the spectral study, not only of
hyperbolic maps with discontinuities in arbitrary dimensions, but also
(using nuclear power decompositions \cite{Ba, Baladijoel}) of the hitherto
unexplored topic of
the dynamical zeta functions of piecewise expanding and piecewise hyperbolic maps
in any dimensions. This should include
billiards maps \cite{DZ} and their dynamical zeta functions in
arbitrary dimensions.
We also hope that the spaces $\UU^{\cone, t,s}_p$ will allow to extend the scope of the renewal
methods introduced in \cite{LT} to  dynamical systems
with  infinite invariant measures. (The induction procedure  used there
introduces discontinuities in the dynamics.)
Finally, it goes without saying that suitable version of the spaces $\UU^{\cone, t,s}_p$ will
be useful to study flows.


F.~Faure and M.~Tsujii 
\cite{FaTsm} recently introduced  new microlocal anisotropic spaces,
for
which the wave front set is more narrowly constrained than for
previous microlocal spaces used for hyperbolic dynamics. It would be interesting to check whether
characteristic functions are bounded multipliers on these new spaces. (Note however
that, contrary to the spaces $\UU^{\cone, t,s}_p$
or the spaces of \cite{FRS, BT1,GL1,DZ},   spaces of
\cite{FaTsm}   do  not appear suitable for perturbations of hyperbolic maps
or flows.)


\section{$\UU^{\cone, t,s}_p$: A Fourier version of the Demers--Gou\"ezel--Liverani spaces}
\label{SOBS}

We recall the ``microlocal'' spaces $\UU^{\cone, t,s}_p$,  for real numbers $s$ and $t$
(in the application, $s<-t<0$) and
$1\le p \le \infty$,
introduced in \cite{Baladijoel}.

\subsection{Basic notation}
\label{motivv}

Suppose that $d=d_s+d_u$ with $d_u\ge 1$ and $d_s\ge 1$. For  
$\ell \ge 1$ and $x\in \real^\ell$,
$\xi \in \real^\ell$,
we write $x\xi$ for the scalar product of $x$ and $\xi$. The Fourier transform
$\FFF$ and its inverse  $\FFF^{-1}$ are defined on 
 rapidly decreasing functions $\varphi, \psi$  by
\begin{align}
\label{Ftran}
\FFF(\varphi)(\xi)&=
\int_{\real^d} \E^{-\I x\xi} \varphi(x)\D x\, , \quad \xi \in \real ^d\, , \\
\label{Ftrani}\FFF^{-1}(\psi)(x)&= \frac{1}{(2\pi)^d}
\int_{\real^d} \E ^{\I x\xi}  \psi(\xi)\D \xi \, , \quad x\in \real ^d \, ,
\end{align}
and extended to the space  of temperate distributions $\varphi, \psi$  as usual
\cite{RS}.
For suitable functions
 $a:\real^{d} \to \real$ (called ``symbols'', note that, in this paper, $a$ depends only on $\xi$,
while more general symbols may depend on $x$ and $\xi$), we define an operator
 $a^{Op}$ acting on suitable 
 $\varphi: \real^d \to \complex$, by
 \begin{equation}\label{neat}
a^{Op} (\varphi)=\FFF^{-1} (a(\cdot)\cdot \FFF(\varphi) )=(\FFF^{-1} a) * \varphi\, .
\end{equation}
Note that
$\|a^{Op} \varphi \|_{L_p} \le \|\FFF^{-1} a\|_1 \|\varphi\|_{L_p}$ for  each $1\le p\le \infty$, by  Young's inequality  in $L_p$.

Fix a $C^\infty$ function $\chi:\real_+\to [0,1]$ with
$\chi(x)=1$ for $x\le 1$, and $\chi(x)=0$ for $x\ge 2$.
For  $D\ge 1$, define $\psi^{(D)}_n:\real^D\to [0,1]$ for $n\in \mathbb Z_+$, by
$\psi^{(D)}_0(\xi)= \chi(\|\xi\|)$, and
\begin{equation}\label{2.49}
\psi^{(D)}_n(\xi)= \chi(2^{-n}\|\xi\|)- \chi(2^{-n+1}\|\xi\|) \, , 
\quad n\ge 1 \,  .
\end{equation}
We set $\psi_n=\psi^{(d)}_n$.
Note that 
$$
\FFF^{-1} \psi^{(D)}_n=2^{D(n-1)} \FFF^{-1} \psi^{(D)}_1(2^{n-1} x)
\mbox{ and } (\sum_{k\le n} \FFF^{-1} \psi^{(D)}_k) (x)=2^{Dn} \FFF^{-1}  \chi(2^n x)
\, ,
$$
 so that, for any $D$,
\begin{equation}\label {forYoung}
\sup_n \|\FFF^{-1} \psi^{(D)}_n\|_{L_1(\real^D)}<\infty\, ,\qquad
\sup_n \|\sum_{k\le n} \FFF^{-1} \psi^{(D)}_k\|_{L_1(\real^D)}<\infty\, ,
\end{equation} 
and for every multi-index $\beta$, there exists a constant $C_\beta$ such that
\begin{equation}\label{betagood}
\|\partial^\beta \psi^{(D)}_n\|_{L_\infty} \le C_\beta 2^{-n|\beta|}\, ,
\qquad  \forall\,  n\ge 0\, .
\end{equation}
We shall work with the following operators $(\psi^{(D)}_{n})^{Op}$ 
(putting $\psi_n^{Op}=(\psi^{(d)}_{n})^{Op}$):
\begin{align*}
(\psi_{n}^{(D)})^{Op}(\varphi)(x)&=
\frac{1}{(2\pi)^{d}}
\int_{y\in \real^{d}}\int_{\eta \in \real^{d}}
\E^{\I (x- y)\eta} 
\psi^{(D)}_{n}(\eta)
\varphi(y)  \D \eta \D y \, .
\end{align*}
Note finally the following almost orthogonality property 
\begin{equation}\label{orthog}
(\psi^{(D)}_{n})^{Op}\circ   (\psi^{(D)}_{m})^{Op} \equiv 0\, \quad\mbox{ if }   |n-m| \ge 2\, .
\end{equation}

\subsection{The local anisotropic spaces $\UU^{\cone_+,t,s}_{p}(K)$ for compact $K\subset \real^d$}

Recall that a cone   is a subset of $\real^d$ invariant under scalar
multiplication.
For two cones $\cone$ and $\cone'$
in $\real^d$, we write 
$\cone \cc \cone'$  
if
$\overline \cone\subset \mbox{ interior} \, (\cone' )\cup \{0\}$.
We say that a cone $\cone$ is $d'$-dimensional 
 if $d'\ge 1$
is the maximal dimension of a linear subset of $\cone$.

\begin{definition}
\label{defpol} An  {\it unstable cone} is  a closed cone $\cone_+$ with nonempty interior 
  of  dimension $d_u$ 
in $\real^d$  so that  $\real^{d_s} \times \{0\}$ is included in\footnote{In Definitions 3.2 and
3.3, and 7 lines above  Definition 3.2 of \cite{Baladijoel},  
the condition ``$\real^{d_s} \times \{0\}$ is included in $\cone_-$'' can be replaced by this condition.}
$(\real^d \setminus \cone_+)\cup \{0\}$.
\end{definition}

\noindent Recall that $r>1$. The next
key ingredient is  adapted from \cite{BT2}:

\begin{definition}[Admissible (or fake) stable leaves]\label{fakke}
Let  $\cone_+$ be an unstable cone, and let $C_\FF > 1$.
Then $\FF(\cone_+,C_\FF,r)$ (or just 
$\FF$) is the set
of all $C^r$ (embedded) submanifolds $\Gamma\subset \real^d$, of dimension $d_s$,
with $C^r$ norms of submanifold
charts  $\le C_\FF$, and
so that the straight line connecting any two distinct points in 
$\Gamma$ is normal to a $d_u$-dimensional subspace contained in  $\cone_+$. 
Denote by $\pi_-$ the orthogonal projection from
$\real^d$ to the quotient $\real^{d_s}$ and by $\pi_\Gamma$ its restriction to $\Gamma$.
Our  assumption   implies that $\pi_\Gamma:\Gamma \to \real^{d_s}$ is a $C^r$ diffeomorphism
onto its image with a $C^r$ inverse, whose $C^r$ norm is bounded by a universal scalar
multiple of $\CC_F$. In the sequel, we replace $\CC_F$ by this larger constant 
and \emph{we restrict  to those $\Gamma$ so  that $\pi_\Gamma$ is surjective.}
\end{definition}

\begin{definition}[Isotropic  norm on stable leaves]\label{iso} 
Fix an unstable cone  $\cone_+$.
Let $\Gamma \in \FF(\cone_+, C_\FF,r)$  and 
  let $\varphi \in C^0(\Gamma)$.
For $w\in \Gamma\subset \real^d$, we set
\begin{align}\label{myop}
\psi_{\ell_s}^{Op(\Gamma)}(\varphi)(w)&=
\frac{1}{(2\pi)^{d_s}}
\int_{z\in \real^{d_s}}\int_{\eta_s \in \real^{d_s}}
\E^{\I (\pi_\Gamma(w)- z)\eta_s} 
\psi_{\ell_s}^{(d_s)}(\eta_s)
\varphi(\pi_{\Gamma}^{-1}(z))  \D \eta_s \D z \, ,
\end{align}
where  $\psi_k^{(d_s)}:\real^{d_s}\to [0,1]$ 
is defined  in \eqref{2.49}. 
For all real numbers $1\le p \le \infty$, 
and $-(r-1)<s<r-1$, define an auxiliary isotropic  norm on $C^0(\Gamma)$   as
\begin{equation} \label{ust}
\|\varphi\|^s_{p,\Gamma}=
\sup_{\ell_s \in \integer_+}
 2^{\ell_s s}\|\psi^{Op(\Gamma)}_{\ell_s}(\varphi)\|_{L_p(\mu_\Gamma)}
\, ,
\end{equation}
where $\mu_\Gamma$ is the Riemann volume on $\Gamma$ induced by the standard metric on $\real^d$.
\end{definition}

Note that \eqref{ust} is equivalent, uniformly in $\Gamma \in \FF$, to 
the  (\cite[\S 2.1, Def. 2]{RS}) classical $d_s$-dimensional Besov norm
$B^s_{p,\infty}$ of $\varphi$ in the chart given by $\pi_\Gamma^{-1}$:
$$
\|\varphi\|^s_{p,\Gamma}\sim 
\|\varphi \circ \pi_\Gamma^{-1}\|_{B^s_{p,\infty}(\real^{d_s})} \, .
$$

We next revisit 
the local space given in \cite{Baladijoel}:

\begin{definition}[The local space $\UU^{\cone_+,t,s}_{p}(K)$]\label{below}
Let $r>1$, let $K\subset \real^d$ be a non-empty compact set. 
For an unstable cone
$\cone_+$, a constant
$C_\FF\ge 1$,
and   real numbers $1 \le p \le \infty$, 
and   $t-(r-1)<s<-t<0$,   define
for $\varphi\in L_\infty$  supported
in $K$,  
\begin{equation}\label{def:normOnRUU}
\|\varphi\|_{\UU^{\cone_+,t,s}_{p}}=
\sup_{\Gamma \in \FF(\cone_+,C_\FF,r)} 
 \sup_{\ell \in \integer_+}
2^{\ell t}   \|  \psi_{\ell}^{Op} (\varphi )\|^s_{p,\Gamma}\, .
\end{equation}
Set $\UU^{t,s}_{p}(K)=\UU^{\cone_+,t,s}_{p}(K)$ to
be the completion of $\{\varphi \in L_{\infty}(K) \mid \|\varphi \|_{\UU^{\cone_+, t,s}_{p}}<\infty\}$  for  
the norm $\|\cdot \|_{\UU^{\cone_+, t,s}_{p}}$.  (Note that $\UU^{t,s}_{p}(K)$ also depends on $r$ and $C_\FF$.)
\end{definition}

\begin{remark}\label{beware} Beware that,  in \cite[Definition 3.3]{Baladijoel}, 
the space  $\UU^{t,s}_{p}(K)$
was defined by  completing  $C^\infty(K)$
(or, equivalently, by  \cite[Lemma 3.4]{Baladijoel} and mollification, $C^{r-1}(K)$).
We do not claim that $C^{\infty}(K)$ is dense in the space $\UU^{t,s}_{p}(K)$ from
Definition~\ref{below}. (See however
\cite[Lemmas 3.7, 3.8]{DZ}.)
But, since all results in \cite{Baladijoel} hold (except the heuristic remark  after \cite[Definition B.1]{Baladijoel}), with the same\footnote{In particular, \cite[Lemma C.1]{Baladijoel} holds
replacing $C^\infty(K)$ by compactly supported distributions.} proofs, for the completion
used in Definition~\ref{below}, we may (abusively) use here the same notation $\UU^{t,s}_{p}(K)$.
The new definition is useful to show that \eqref{BMP} implies that $1_\Lambda \UU^{t,s}_{p}(K)
\subset \UU^{t,s}_{p}(K)$.
\end{remark}


The following lemma was proved\footnote{Injectivity of the embedding
into distributions follows from injectivity of the embedding of the closure of the
(larger) set of those tempered distributions $\varphi$ so that $ \|\varphi \|_{\UU^{t,s}_{p}}<\infty$.}  in \cite{Baladijoel}:
\begin{lemma}[Comparing $\UU_p^{\cone_+,t,s}(K)$ with classical spaces]\label{lm:CsU}
Assume $-(r-1)<s<-t<0$.
For any $u> t$, there exists a constant $C=C(u,K)$ such that $\|\varphi\|_{\UU^{\cone_+,t,s}_p} \le C \|\varphi\|_{C^u}$ for all $\varphi\in C^u(K)$. 
For any $u>|t+s|$, the space $\UU^{\cone_+,t,s}_p(K)$ is contained in the space of distributions of order $u$ supported on $K$. 
\end{lemma}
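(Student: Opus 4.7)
The plan is to prove the two inclusions separately, both via Littlewood--Paley decompositions coordinated with the slicing structure of the $\UU^{\cone_+,t,s}_p$ norm.

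For the embedding $C^u(K)\hookrightarrow\UU^{\cone_+,t,s}_p(K)$ with $u>t$, I would combine the classical bound $\|\psi_\ell^{Op}\varphi\|_{L_\infty(\real^d)}\lesssim 2^{-\ell u}\|\varphi\|_{C^u}$ for $\ell\ge 1$ (using that $\psi_\ell$ vanishes near $0$ and the Schwartz profile $\FFF^{-1}\psi_\ell=2^{\ell d}g(2^\ell\cdot)$) with the rapid decay of $\FFF^{-1}\psi_\ell$ away from $K$. Since $\supp\varphi\subset K$, this decay forces $\psi_\ell^{Op}\varphi$ to be essentially concentrated on a fixed neighborhood $K'$ of $K$, giving the same bound for $\|\psi_\ell^{Op}\varphi\|_{L_p(\mu_\Gamma)}$ uniformly in $\Gamma\in\FF(\cone_+,C_\FF,r)$. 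Because $s<0$ yields $\|\cdot\|^s_{p,\Gamma}\lesssim\|\cdot\|_{L_p(\mu_\Gamma)}$, multiplication by $2^{\ell t}$ gives $\lesssim 2^{-\ell(u-t)}\|\varphi\|_{C^u}$, uniformly bounded in $\ell$ and $\Gamma$ because $u>t$.

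For the embedding of $\UU^{\cone_+,t,s}_p(K)$ into distributions of order $u$ with $u>|t+s|=-(t+s)$, by Definition~\ref{below} and Remark~\ref{beware} it suffices to prove the dual estimate
\[
|\langle\varphi,\psi\rangle|\lesssim\|\varphi\|_{\UU^{\cone_+,t,s}_p}\|\psi\|_{C^u},\qquad\varphi\in L_\infty(K),\;\psi\in C^u_c(\real^d).
\]
I would foliate $\real^d$ by flat admissible leaves $\Gamma_y:=W^\perp+y$, $y\in W$, where $W\subset\cone_+$ is a $d_u$-dimensional linear subspace (available by Definition~\ref{defpol}); the $\Gamma_y$ lie in $\FF(\cone_+,C_\FF,r)$ for $C_\FF$ large enough. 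Using that $\psi_\ell$ is radial (so $\psi_\ell^{Op}$ is self-adjoint) and that \eqref{orthog} gives almost orthogonality, decompose
\[
\langle\varphi,\psi\rangle=\sum_{\ell\ge 0}\langle\psi_\ell^{Op}\varphi,\tilde\psi_\ell^{Op}\psi\rangle,\qquad\tilde\psi_\ell:=\psi_{\ell-1}+\psi_\ell+\psi_{\ell+1}.
\]
Fubini along the foliation together with the $B^s_{p,\infty}$--$B^{-s}_{p',1}$ duality on each slice (through the equivalence stated after Definition~\ref{iso}) bound the $\ell$-th summand by $2^{-\ell t}\|\varphi\|_\UU\int_W\|\tilde\psi_\ell^{Op}\psi\|_{B^{-s}_{p',1}(\Gamma_y)}\,dy$, the first factor coming straight from \eqref{def:normOnRUU}.

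The crux of the argument is controlling $\|\tilde\psi_\ell^{Op}\psi\|_{B^{-s}_{p',1}(\Gamma_y)}$ uniformly in $y$. Since $\tilde\psi_\ell^{Op}\psi$ is frequency-localized at scale $2^\ell$ in $\real^d$, Bernstein combined with $\|\tilde\psi_\ell^{Op}\psi\|_{L_\infty}\lesssim 2^{-\ell u}\|\psi\|_{C^u}$ yields $\|\tilde\psi_\ell^{Op}\psi\|_{C^a(\real^d)}\lesssim 2^{\ell(a-u)}\|\psi\|_{C^u}$ for every $a\ge 0$. Picking $a=-s+\epsilon$ with $\epsilon>0$ small, the embedding $C^a(\Gamma_y\cap K')\hookrightarrow B^{-s}_{p',1}(\Gamma_y\cap K')$ (valid whenever $a>-s$, uniform in $y$ because the flat leaves have identical intrinsic geometry; in the general curved case the uniform $C^r$ bounds of Definition~\ref{fakke} supply this uniformity) gives $\|\tilde\psi_\ell^{Op}\psi\|_{B^{-s}_{p',1}(\Gamma_y)}\lesssim 2^{\ell(-s+\epsilon-u)}\|\psi\|_{C^u}$. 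Integrating over the bounded effective $y$-transversal (rapid decay of $\FFF^{-1}\tilde\psi_\ell$ off $\supp\psi$ confines the integrand to a fixed bounded set up to a fast-decaying tail) produces a summand $\lesssim 2^{-\ell(t+s+u-\epsilon)}\|\varphi\|_\UU\|\psi\|_{C^u}$, summable in $\ell$ for sufficiently small $\epsilon>0$ precisely because $u>-(t+s)$.
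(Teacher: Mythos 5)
The paper does not prove this lemma in the present text --- it is quoted from \cite{Baladijoel} --- but your argument is correct and is essentially the expected one: the Littlewood--Paley bound $\|\psi_\ell^{Op}\varphi\|_{L_\infty}\lesssim 2^{-\ell u}\|\varphi\|_{C^u}$ combined with compact support and $s<0$ for the first embedding, and a leafwise $B^s_{p,\infty}$--$B^{-s}_{p',1}$ duality along a foliation by admissible leaves for the second. One cosmetic remark: instead of flat leaves $W^\perp+y$ (which forces $C_\FF$ to be large enough to admit them), you could foliate by the translates $\Gamma+v$, $v\in\{0\}\times\real^{d_u}$, of an arbitrary admissible leaf, which is exactly the translation-invariance of $\FF$ already used for \eqref{magic5}.
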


\subsection{The global spaces $\UU^{\cone, t,s}_{p}$ of anisotropic distributions} 

We finally introduce the global spaces $\UU^{\cone, t,s}_{p}$  of distributions
on a compact manifold $M$.

\begin{definition}\label{ChP}
An {\it admissible chart system and partition of unity}
is
a finite system of  local charts $\{(V_\omega, \kappa_\omega)\}_{\omega\in \Omega}$, with 
open subsets
$V_\omega\subset M$, and $C^\infty$ diffeomorphisms
$\kappa_\omega : U_ \omega\to V_\omega$ such that $M \subset \cup_\omega V_\omega$, and  
$U_\omega\subset \real^d$  
is bounded and open, together
with a   $C^{\infty}$ partition of  unity  $\{\theta_\omega\}_{\omega\in \Omega}$ for $M$,
   subordinate to the cover $\VV=\{V_\omega\}$. 
\end{definition}

\begin{definition}[Anisotropic spaces  $\UU^{\cone, t,s}_{p}$ on $M$]\label{defnormU}
Fix $r>1$, an  admissible chart system and partition of unity,   
$\CC_F\ge 1$ and  a system of  cones
$
\cone=\{\cone_{\omega,+}\}_{\omega\in \Omega}
$.
Fix $1\le p\le \infty$, and  real numbers $-(r-1)<s<-t <0$.
The Banach space $\UU^{\cone,t,s}_p=\UU^{\cone,t,s,r,C_\FF}_p$  is 
the completion (see Remark~\ref{beware}) of  $\{\varphi \in L_{\infty}(M) \mid \|\varphi \|_{\UU^{\cone, t,s}_{p}}<\infty\}$ for the norm
$
\|\varphi\|_{\UU^{\cone, t,s}_{p}}:=\max_{\omega \in \Omega} 
\|(\theta_\omega\cdot \varphi)\circ \kappa_\omega\|_{\UU^{\cone_{\omega,+}, t,s}_{p}}
$.
\end{definition}

\begin{remark}[Admissible systems $\{\cone_{\omega,\pm}\}$]
To get  a spectral gap
for the transfer operator $\LL_{1/|\det DT|}$ associated to  a
$C^{\tilde r}$  Anosov diffeomorphism $T$ for $\tilde r>1$,  one must 
take $r=\tilde r$ and
consider an admissible chart system and partition of unity,  with cones $\{\cone_{\omega,+}\}$, satisfying the following conditions \cite{Baladijoel}: 

a) Let $E^s$ and $E^u$ be the  stable, respectively unstable,
bundles of $T$. Then
if $x\in V_\omega$, the cone
$(D\kappa_\omega^{-1})^{*}_x(\cone_{\omega,+})$    
contains the ($d_u$-dimensional) normal subspace of $E^s(x)$, and there exists a $d_s$-dimensional cone
$\cone_{\omega,-}$,  with nonempty interior, so that $\cone_{\omega,+}\cap \cone_{\omega,-}=\{0\}$, and
so that
$(D\kappa_\omega^{-1})^{*}_x(\cone_{\omega,-})$
contains the ($d_s$-dimensional)
normal subspace of  $E^u(x)$.

b)
If  $V_{\omega'\omega}=T(V_\omega)\cap V_{\omega'}\ne \emptyset$,  the $C^r$ map 
corresponding to $T^{-1}$ in charts, 
\[
F=F_{\omega'\omega}=\kappa^{-1}_\omega
\circ T^{-1}\circ \kappa_{\omega'}
:\kappa_{\omega'}^{-1}(V_{\omega'\omega}) \to U_\omega \, ,
\]
extends to a  bilipschitz $C^1$
diffeomorphism of $\real^d$ so that (by definition, $\cone_{\omega',-}\cc
(\real^{d_s} \setminus \cone_{\omega',+})$)
$$DF_{x}^{tr}(\real^d\setminus \cone_{\omega,+}) \cc \cone_{\omega',-}\, , \qquad 
\forall x\in \real^d\, .
$$

c)
Furthermore, there exists, for each $x,y$,
a linear transformation $\mathbb L_{xy}$ so that
$$(\mathbb L_{xy})^{tr}(\real^d\setminus \cone_{\omega, +}) \cc  \cone_{\omega',-}\, 
\mbox{ and }\, 
\mathbb L_{xy}(x-y)=F(x)- F(y)\, .
$$
 A  
map $F$  satisfying (b--c) is called {\it regular
 cone hyperbolic} from 
$\cone_{\omega,\pm}$ to $\cone_{\omega',\pm}$.


The anisotropic spaces $\UU^{\cone, t,s}_1$ (with $p=1$) are analogues of the Blank--Keller--Gou\"ezel--Liverani \cite{BKL,GL1}
spaces $\BB^{t,|s+t|}$ associated to $T$, for integer $t$ and $s<-t$. 
The spaces $\UU^{\cone, t,s}_p$ are somewhat similar to  the Demers--Liverani spaces  \cite{DL} when $p>1$
and $-1+1/p<s<-t<0$.  See \cite{Baladijoel}.
\end{remark}


\section{Characteristic functions as bounded multipliers}

\subsection{Statement of the main result}

Fix $r>1$, $C_\FF>0$, $p\in(1, \infty)$,  an admissible chart system and partition of unity on $M$ (Definition~\ref{ChP}), and an associated cone system 
$\cone=\{\cone_{\omega,+}\}$.
Let $\tilde \Lambda \subset M$ be an open set so that $\partial \tilde \Lambda$ is a finite union of 
$C^r$ hypersurfaces $\partial \tilde \Lambda_i$ so that the normal
 vector at any $x\in \partial \tilde \Lambda_i\cap V_\omega$ lies in 
$\real^d \setminus \cone_{\omega,+}$ (a  transversality condition). 
We claim that if  $\max\{t-(r-1), -1+1/p\}<s<-t<0$  then, for any\footnote{Given 
two cone systems of same cardinality, $\cone \cc \tilde \cone$ means
$\cone_{\omega,+} \cc \tilde \cone_{\omega,+}$ for all $\omega$.}
cone system $\tilde \cone$ with\footnote{Enlarging the cones is  not a problem
when studying  $1_{\tilde \Lambda} ((f \varphi) \circ F)$ for a $C^{r-1}$ function $f$ and 
a $C^r$ regular cone-hyperbolic map $F$
from $\cone$ to $\tilde \cone$ with $\cone \cc \tilde \cone$,
since  the Lasota--Yorke estimate   \cite[Lemma 4.2]{Baladijoel}
gives   $\|(f \varphi )\circ F \|_{\UU^{\tilde \cone, t,s}_p}\le C_{f,F} \| \varphi \|_{\UU^{ \cone ,t,s}_p}$.} $\cone \cc \tilde \cone$,
there exists $C_{\tilde \Lambda, \tilde \cone}<\infty$ so that
$$\|1_{\tilde \Lambda} \varphi \|_{\UU^{\cone, t,s}_p}\le C_{\tilde \Lambda,\tilde \cone} \| \varphi \|_{\UU^{\tilde \cone ,t,s}_p}
\, , \quad \forall \varphi \, .
$$
Since $t-(r-1)<s<-t$, by using suitable $C^\infty$ partitions of
unity $h_j$ and $C^r$ coordinates $F_j$ (arbitrarily close to
the identity, and thus regular cone hyperbolic from $\tilde \cone$
to $\cone$ if $\cone \cc \tilde \cone$), and exploiting
the  Lasota--Yorke  estimate   \cite[Lemma 4.2]{Baladijoel}  for the corresponding
transfer operators,
we reduce to:

\begin{theorem}[Characteristic functions of half-spaces]\label{propp}
Fix  $r>1$, $C_\FF>0$, and an unstable cone $\cone_+$. Let $K \subset \real^d$ be compact,
 and let $\tilde \Lambda\subset \real^d$ be a half-space
whose unit  normal vector $u_{\tilde \Lambda}$ lies in  $\real^d \setminus \cone_+$.
 Then for any 
$$
1<p<\infty \mbox{ and } \max\{t-(r-1), -1+\frac{1}{p}, \} < s <-t< 0  \, ,
$$
there
exists $C<\infty$ so that for any $\varphi \in \UU^{\cone_+,t,s}_p(K)$ we have, 
\begin{equation}\label{BMP}
\|1_{\tilde \Lambda} \varphi\|_{\UU^{\cone_{+}, t,s}_p}\le C \|\varphi\|_{\UU^{\cone_{+}, t,s}_p} \, .
\end{equation}
\end{theorem}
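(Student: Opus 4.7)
The plan is to bound $2^{\ell t}\|\psi_\ell^{Op}(1_{\tilde\Lambda}\varphi)\|^s_{p,\Gamma}$ by $C\|\varphi\|_{\UU^{\cone_+,t,s}_p}$, uniformly in $\Gamma\in\FF(\cone_+,C_\FF,r)$ and $\ell\in\integer_+$. The guiding principle is that $u_{\tilde\Lambda}\notin\cone_+$ forces the jump of $1_{\tilde\Lambda}$ into the ``stable'' cotangent directions, in which our space has Besov regularity of order $s$; the problem ought therefore to reduce to a Besov analogue of Strichartz's theorem on $\real^{d_s}$, which is valid exactly in the range $-1+1/p<s<1/p$. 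Since the hypotheses give $s<-t<0<1/p$, only the lower constraint $s>-1+1/p$ will bind.

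First I would establish a leaf-wise multiplier theorem. The hypothesis $u_{\tilde\Lambda}\notin\cone_+$, coupled with the property in Definition~\ref{fakke} that secants of $\Gamma$ are orthogonal to a $d_u$-dimensional subspace of $\cone_+$, ensures that $\partial\tilde\Lambda$ intersects $\Gamma$ transversally in a $C^r$ hypersurface which projects via $\pi_\Gamma$ to a $C^r$ hypersurface in $\real^{d_s}$ with geometry controlled uniformly in $\Gamma\in\FF$. Using the equivalence $\|\cdot\|^s_{p,\Gamma}\sim\|\cdot\circ\pi_\Gamma^{-1}\|_{B^s_{p,\infty}(\real^{d_s})}$ highlighted after Definition~\ref{iso}, the Besov version of Strichartz's theorem for characteristic functions of domains with $C^r$ boundary (valid for $-1+1/p<s<1/p$) delivers a uniform-in-$\Gamma$ bound $\|1_{\tilde\Lambda}g\|^s_{p,\Gamma}\le C\|g\|^s_{p,\Gamma}$ for any sufficiently smooth $g$ defined near $\Gamma$.

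Next I would combine this leaf-wise estimate with a paraproduct analysis in $\real^d$. Decompose $\varphi=\sum_k\psi_k^{Op}\varphi$ (legitimate thanks to Remark~\ref{beware}) and split
$$\psi_\ell^{Op}(1_{\tilde\Lambda}\varphi)=\Sigma_\ell^{\mathrm{low}}+\Sigma_\ell^{\mathrm{diag}}+\Sigma_\ell^{\mathrm{high}}$$
according to $k\le\ell-N_0$, $|k-\ell|<N_0$, and $k\ge\ell+N_0$, for a suitably chosen $N_0$. The diagonal block is controlled directly by the leaf-wise multiplier estimate together with the almost-orthogonality~\eqref{orthog} and the a-priori bound $\|\psi_k^{Op}\varphi\|^s_{p,\Gamma}\le 2^{-kt}\|\varphi\|_{\UU^{\cone_+,t,s}_p}$ (which is immediate from Definition~\ref{below}). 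The low block must be treated carefully, since summing the naive bounds $\|\psi_k^{Op}\varphi\|^s_{p,\Gamma}\le 2^{-kt}\|\varphi\|_{\UU^{\cone_+,t,s}_p}$ against $2^{\ell t}$ diverges; the fix is to exploit that, for $k\le\ell-N_0$, the slice $\psi_k^{Op}\varphi$ is essentially constant on the $2^{-\ell}$-scale where $\psi_\ell^{Op}(1_{\tilde\Lambda}\cdot)$ concentrates, reducing the estimate to evaluation of $\psi_k^{Op}\varphi$ at $\partial\tilde\Lambda$ combined with $\psi_\ell^{Op}(1_{\tilde\Lambda})$ being supported in a $2^{-\ell}$ tubular neighbourhood of $\partial\tilde\Lambda$.

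I expect the principal obstacle to be the high block $\Sigma_\ell^{\mathrm{high}}$. Here one must exploit that $\widehat{1_{\tilde\Lambda}}$ is concentrated on the line through $u_{\tilde\Lambda}$ and decays like $1/\xi_{u_{\tilde\Lambda}}$, so $\psi_\ell^{Op}(1_{\tilde\Lambda} g)$ for $g$ of frequency $\sim 2^k\gg 2^\ell$ only sees the component of $g$'s frequency essentially tangential to $\partial\tilde\Lambda$. This should produce a Strichartz-sharp gain of size $2^{-(k-\ell)(1-1/p)}$, which when combined with $\|\psi_k^{Op}\varphi\|^s_{p,\Gamma}\le 2^{-kt}\|\varphi\|_{\UU^{\cone_+,t,s}_p}$ sums (against the weight $2^{\ell t}$) geometrically over $k\ge\ell+N_0$ precisely when $s>-1+1/p$, matching the sharpness of Strichartz's lower bound. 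Assembling the three blocks yields~\eqref{BMP}.
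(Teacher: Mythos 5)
Your overall architecture (ambient Littlewood--Paley trichotomy, a leafwise multiplier theorem at the critical Besov regularity, and the identification of $s>-1+1/p$ as the Strichartz-type constraint) matches the spirit of the paper, which also proceeds by paraproduct: after rotating so that $1_{\tilde\Lambda}$ depends on $x_1$ alone, it splits $1_\Lambda\varphi$ into $\Pi_1+\Pi_2+\Pi_3$ and proves a leafwise multiplier lemma (Lemma~\ref{leboot}) via a second, $d_s$-dimensional paraproduct. But there is a genuine gap: you never confront the mismatch between the \emph{ambient} frequency decomposition $S_k=\psi_k^{Op}$ and the \emph{leafwise} decomposition $\tilde S_{k_s}$ that defines $\|\cdot\|^s_{p,\Gamma}$. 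On a curved $C^r$ leaf $\Gamma$, the restriction of an ambient packet $S_k\varphi$ is not band-limited in $\real^{d_s}$; its leafwise frequency content at scale $2^{j}\gg 2^{k}$ decays only like $2^{-jr}$ (the paper's Lemma~\ref{sublemma4.4}, proved by integrating by parts $r$ times along $\Gamma$). Controlling these off-diagonal leafwise frequencies is exactly where the hypothesis $s>t-(r-1)$ and the constant $C_\FF$ enter, and without this step neither your low block nor the resonant interactions close. Your proposal uses neither that hypothesis nor the finite smoothness of the leaves, which is a sign the argument cannot be completed as written.

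Two further steps would not survive being made rigorous. For the low block, ``evaluation of $\psi_k^{Op}\varphi$ at $\partial\tilde\Lambda$'' is not controlled by the norms at hand (only leafwise $L_p$/Besov quantities are), and $\psi_\ell^{Op}(1_{\tilde\Lambda})$ is supported in a $2^{-\ell}$-tube only up to Schwartz tails; the paper instead uses that the trace of $S_k 1_\Lambda$ on any leaf depends on $x_1$ alone (equation~\eqref{keyo}) and that $\|1_\Lambda\|_{B^{1/p'}_{p',\infty}(\real)}<\infty$, fed into a second leafwise paraproduct. For the high block, your bookkeeping is also mislabelled: the sum over $k\ge\ell$ of $2^{(\ell-k)t}$ converges because $t>0$, not because $s>-1+1/p$; the latter condition is needed \emph{uniformly in $k$} inside the leafwise multiplier bound for the resonant term $\tilde\Pi_2$, where one applies the one-dimensional Nikol'skij inequality in the $x_1$ variable to trade $s$ for $s_1=s+1/p_1-1/p>0$. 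The claimed gain $2^{-(k-\ell)(1-1/p)}$ from radial decay of $\widehat{1_{\tilde\Lambda}}$ is a heuristic for the isotropic $L_p$ Strichartz theorem and does not transfer to the anisotropic leafwise norm without essentially redoing the leafwise paraproduct analysis.
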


Since $1_{\tilde \Lambda} \varphi \in L_\infty$ if $\varphi\in L_\infty$ and since $\UU^{\cone_+,t,s}_p(K)$
is the completion of a set of bounded functions, the bound \eqref{BMP}
implies that $1_{\tilde \Lambda} \varphi \in \UU^{\cone_+,t,s}_p(K)$ if  $\varphi \in \UU^{\cone_+,t,s}_p(K)$
(use Cauchy sequences).

The conditions in the theorem imply $t<1-1/p$. (This does not imply $t<1/p$
if $p>2$.)

\begin{remark}[Heuristic proof via interpolation: $t<1/p$ vs. $t<\min\{|s|, r-1 -|s|\}$]\label{heur}
A heuristic argument for the bounded multiplier property \eqref{BMP}
under the conditions $-1+1/p<s<0<t<1/p$ was sketched in \cite[Remark 3.9]{Baladijoel}, exploiting
 via interpolation the fact  that
 (\cite[Thm 4.6.3/1]{RS}) the characteristic function of a half-plane in $\real^n$ is a bounded multiplier
on the Besov space $B^\tau_{p,\infty}(\real^n)$ if 
$
\frac{1}{p}-1 < \tau < \frac{1}{p}
$. 
It does not seem easy to fill in  details of
this argument, and we shall prove  Theorem~\ref{propp} using  paraproduct decompositions instead of interpolation.
The restriction $t-(r-1)<s<-t$ is in any case necessary for applications to hyperbolic dynamics,
and the bound for the essential spectral radius in \cite{Baladijoel} improves as
$p \to 1$.
\end{remark}

\subsection{Basic toolbox (Nikol'skij and Young bounds,  paraproduct decomposition, 
and a crucial trivial observation on functions of a single variable)}

The proofs below use the  {\it Nikol'skij inequality} 
  (see e.g.
\cite[Remark 2.2.3.4, p. 32]{RS}) which  says, in dimension $D\ge 1$, that
for any $p> p_1>0$ there exists $C$ so that for any $M>1$, and any $f$ with 
$\supp \, \FFF (f)\subset \{|\xi|\le M\}$, 
\begin{equation}\label{Nik}
\|f\|_{L_{p}(\real^D)}\le C M^{D (1/p_1-1/p)}\|f\|_{L_{p_1}(\real^D)}\, .
\end{equation}

We shall also use the  following {\it leafwise version of  Young's inequality}
(which can be proved like \cite[Lemma 4.2]{BT2}, see  \cite{Baladijoel}, by
using that any translation $\Gamma + x$ of $\Gamma\in \FF$ also belongs to  $\FF$):
\begin{equation}\label{magic5}
\| \tilde \psi * \varphi \|^s_{p, \Gamma}
\le  \|\tilde \psi\|_{L_1(\real^d)} \sup_{x\in \real^d} \| \varphi \|^s_{p,\Gamma+x}
\le \|\tilde \psi\|_{L_1} \sup_{\tilde \Gamma \in \FF} \| \varphi \|^s_{p,\tilde \Gamma}\, .
\end{equation}


Write $S_k \varphi=\psi_k^{Op} (\varphi)$ for $k\ge 0$,  set
$S_{-1} \varphi \equiv 0$, and  put
$
S^j \varphi=\sum_{k=0}^j S_k \varphi 
$ for  integer $j\ge 0$.
The (a priori formal) {\it paraproduct  decomposition} (see \cite[\S 4.4]{RS})  is 
\begin{align}
 \nonumber \varphi \cdot \upsilon &= \lim_{j\to \infty} (S^j \varphi) \cdot( S^j \upsilon)
 \\\nonumber &
 =
\sum_{k=2}^\infty \sum_{j=0}^{k-2} S_j \varphi \cdot S_k \upsilon
+\sum_{k=0}^\infty \sum_{j=k-1}^{k+1} S_j \varphi \cdot S_k \upsilon
+\sum_{j=2}^\infty \sum_{k=0}^{j-2} S_j \varphi \cdot S_k \upsilon \\
\label{para}&=\Pi_1(\varphi,\upsilon)+\Pi_2(\varphi,\upsilon)+\Pi_3(\varphi,\upsilon)\, ,
\end{align}
where we put
\begin{align*}
\Pi_1(\varphi ,\upsilon )&= 
\sum_{k=2}^\infty  S^{k-2}  \varphi \cdot S_k \upsilon\, ,\qquad \quad
\Pi_2(\varphi ,\upsilon )=
\sum_{k=0}^\infty (S_{k-1} \varphi + S_k \varphi+ S_{k+1}\varphi )  \cdot S_k \upsilon\, ,\\
&\qquad \mbox { and } \qquad \Pi_3(\varphi ,\upsilon )=\sum_{j=2}^\infty  S_j \varphi \cdot S^{j-2} \upsilon=\Pi_1(\upsilon,\varphi) \, .
\end{align*}
The two key facts motivating the  decomposition \eqref{para} are
\begin{equation}\label{f1}
\supp \, \FFF (S^{k-2} \varphi \cdot S_k \upsilon) \subset \{2^{k-3}\le  \|\xi \|\le 2^{k+1} \}\, ,
\quad \forall k \ge 2 \, , 
\end{equation}
and
\begin{equation}\label{f2}
\supp \,  \FFF \, (\sum_{j=k-1}^{k+1} S_j \varphi \cdot S_k \upsilon) \subset \{ \|\xi \|\le 5 \cdot 2^{k} \}\, ,
\quad \forall k \ge 0 \, .
\end{equation}

Finally, the proof of Theorem~\ref{propp} hinges on the fact that the singular set of a
characteristic function is co-dimension one:
We shall reduce there to the
case $\partial \tilde \Lambda=\{x_1=0\}$ so that 
$1_{\tilde \Lambda}$ only depends on the first coordinate $x_1$ of
$x\in \real ^d$. We shall use  below the fact that for such $\tilde \Lambda$
(see \cite[Lemma 4.6.3.2 (ii), p. 209, Lemma 2.3.1/3, p. 48]{RS})  for all $p \in (1, \infty)$
\begin{equation}\label{needit}
\| 1_{\tilde \Lambda}\|_{B^t_{p, q}(\real^d)} < \infty \, ,
\mbox{ if  } 0<t<1/p \mbox{ and } 0<q< \infty \mbox{ or } t=1/p \mbox{ and } q=\infty\, .
\end{equation}
We also
note for further use the {\it trivial but absolutely essential fact}
that if a function $\upsilon(x)$ only depends on
$x_1$  then $S_k \upsilon=(\FFF^{-1}\psi_k) * \upsilon$ also only
depends on $x_1$ for all $k$, and, more precisely,
\begin{equation}\label{keyo}
S_k \upsilon (x):= (\FFF^{-1} \psi_k) * \upsilon (x)=(\FFF^{-1} \psi^{(1)}_k) * \upsilon (x_1) \, .
\end{equation}
Indeed
$$
(\FFF^{-1} \psi_k) * \upsilon (x)=
\int  (\FFF^{-1} \psi_k) (y) \D y_2 \ldots \D y_d \, \upsilon (x_1-y_1)  \D y_1\, ,
$$
and, since $(2\pi)^{-(d-1)} \int_{\real^{d-1}} e^{\I (y_2,\ldots, y_d )(\xi_2,\ldots, \xi_d)}dy_2 \ldots dy_d$ (the inverse Fourier transform
of the constant function) is the
Dirac mass at $(\xi_2,\ldots,\xi_d)=0$, we get,
\begin{align*}
  \int_{\real^{d-1}} &(\FFF^{-1} \psi_k) (y_1, y_2, \ldots, y_d) \D y_2 \ldots \D y_d \\
&=
\frac{1}{(2\pi)^d} \int_{\real^{d-1}} 
  \int_{\real} \int_{\real^{d-1}}
e^{\I y_1 \xi_1} \psi_k(\xi) \D \xi_1 \D \xi_2 \ldots \D \xi_d
    e^{\I (y_2,\ldots ,y_d )(\xi_2,\ldots ,\xi_d)} \D y_2 \ldots \D y_d\\
 & = \frac{1}{2\pi}\int_{\real} e^{\I y_1 \xi_1} \psi_k(\xi_1,0) \D \xi_1 =(\FFF^{-1} \psi_k^{(1)})
(y_1) \, ,
\end{align*}
where we used that $\psi^{(d)}_k(\xi_1,0)=\psi^{(1)}_k(\xi_1)$.

\subsection{Multipliers depending on a single coordinate}

This subsection is devoted to a classical  property of multipliers depending 
on a single coordinate, which is instrumental in the
proof of Theorem~\ref{propp}.
If $1\le p\le \infty$, let $1\le p'\le \infty$ be so that 
\begin{equation}\label{p'}
\frac 1 p+\frac 1 {p'} =1\, ,\mbox{ i.e., } \, p'=\frac{p}{p-1}\, .
\end{equation}

\begin{lemma}\label{leboot}
Let $d_s\ge 1$.
Let $1<p<\infty$ and let
$-1+\frac{1}{p}<s< 0$. Then there exists $C<\infty$
so that for all  $f, g:\real^{d_s}\to \complex$ with 
 $g(x)=g(x_1)$, 
 \begin{equation}
\label{bootstrap}
\| f g\|_{B^s_{p,\infty}(\real^{d_s})}
\le C \| f \|_{B^s_{p,\infty}(\real^{d_s})}(\| g\|_{B^{1/ p'}_{ p',\infty}(\real)}+\|g\|_{L_\infty(\real)})\, .
\end{equation}
\end{lemma}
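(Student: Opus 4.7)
The plan is to apply the paraproduct decomposition \eqref{para} and bound each of $\Pi_1(f,g)$, $\Pi_2(f,g)$, $\Pi_3(f,g)$ separately in the norm $\|h\|_{B^s_{p,\infty}(\real^{d_s})}\sim\sup_m 2^{ms}\|S_m h\|_{L_p}$, exploiting at every step the ``trivial but absolutely essential'' fact \eqref{keyo}: since $g$ depends only on $x_1$, the Littlewood--Paley pieces satisfy $\|S_k g\|_{L_\infty(\real^{d_s})}\le C\|g\|_{L_\infty(\real)}$ uniformly in $k$ (by \eqref{forYoung}) and $\|S_k g\|_{L_{p'}(\real)}\le 2^{-k/p'}\|g\|_{B^{1/p'}_{p',\infty}(\real)}$.

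I would first dispose of $\Pi_1(f,g)=\sum_{k\ge 2}S^{k-2}f\cdot S_k g$ and, symmetrically, $\Pi_3(f,g)=\sum_{j\ge 2}S^{j-2}g\cdot S_j f$. Because of the annular Fourier support \eqref{f1}, only $O(1)$ terms near $k=m$ contribute to $S_m\Pi_1$ (and similarly for $\Pi_3$). The geometric-sum estimate $\|S^{k-2}f\|_{L_p}\le \sum_{j\le k-2}2^{-js}\|f\|_{B^s_{p,\infty}}\le C 2^{-ks}\|f\|_{B^s_{p,\infty}}$ (valid since $s<0$) combined with $\|S_k g\|_{L_\infty}\le C\|g\|_{L_\infty}$ yields $2^{ms}\|S_m\Pi_1\|_{L_p}\le C\|f\|_{B^s_{p,\infty}}\|g\|_{L_\infty}$, and the same argument with the roles of $f$ and $g$ swapped (using that $\|S^{j-2}g\|_{L_\infty}\le C\|g\|_{L_\infty}$ uniformly in $j$, since $\sum_{k\le j}\FFF^{-1}\psi_k$ has uniformly bounded $L_1$-norm by \eqref{forYoung}) handles $\Pi_3$.

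The main obstacle is $\Pi_2(f,g)=\sum_k T_k$, with $T_k=\tilde S_k f\cdot S_k g$ and $\tilde S_k f=S_{k-1}f+S_k f+S_{k+1}f$: by \eqref{f2} the Fourier support of $T_k$ lies only in a ball $\{|\xi|\le 5\cdot 2^k\}$, not an annulus, and the naive bound $\|S_m T_k\|_{L_p}\le C\|T_k\|_{L_p}\le C 2^{-ks}\|f\|_{B^s_{p,\infty}}\|g\|_{L_\infty}$ produces a divergent sum over $k\ge m-c$ when $s<0$. My plan is to dualize, $\|S_m T_k\|_{L_p}=\sup_{\|\phi\|_{L_{p'}}\le 1}|\langle T_k,S_m\phi\rangle|$, and use that $g$ depends only on $x_1$ to apply Fubini:
\[
\langle T_k,S_m\phi\rangle=\int_{\real}S_k g(x_1)\Bigl(\int_{\real^{d_s-1}}\tilde S_k f(x_1,x')\,S_m\phi(x_1,x')\,\D x'\Bigr)\D x_1.
\]
H\"older in $x'$ with exponents $(p,p')$, followed by the three-factor H\"older in $x_1$ with exponents $(p',p,\infty)$ applied to $S_k g(x_1)$, $\|\tilde S_k f(x_1,\cdot)\|_{L_p(\real^{d_s-1})}$, $\|S_m\phi(x_1,\cdot)\|_{L_{p'}(\real^{d_s-1})}$, produces
\[
|\langle T_k,S_m\phi\rangle|\le \|S_k g\|_{L_{p'}(\real)}\,\|\tilde S_k f\|_{L_p(\real^{d_s})}\,\sup_{x_1}\|S_m\phi(x_1,\cdot)\|_{L_{p'}(\real^{d_s-1})}.
\]
A vector-valued Nikol'skij inequality in the variable $x_1$ (the $\xi_1$-support of $\widehat{S_m\phi}$ lies in $\{|\xi_1|\le 2^{m+1}\}$, so convolving against a Schwartz cutoff rescaled in $x_1$ alone yields a vector-valued Young estimate in $L_{p'}(\real^{d_s-1})$) bounds the last factor by $C\,2^{m/p'}\|\phi\|_{L_{p'}}$. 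Combining this with the Besov bounds on $S_k g$ and $\tilde S_k f$ gives
\[
\|S_m T_k\|_{L_p}\le C\,2^{m/p'}\,2^{-k(s+1/p')}\|f\|_{B^s_{p,\infty}}\|g\|_{B^{1/p'}_{p',\infty}},
\]
and summing the geometric series $\sum_{k\ge m-c}2^{-k(s+1/p')}\sim 2^{-m(s+1/p')}$, whose convergence is precisely the hypothesis $s>-1+1/p$, yields $2^{ms}\|S_m\Pi_2\|_{L_p}\le C\|f\|_{B^s_{p,\infty}}\|g\|_{B^{1/p'}_{p',\infty}}$; adding the three pieces delivers \eqref{bootstrap}. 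The hard step is this $\Pi_2$ estimate, where both the Nikol'skij gain of $2^{m/p'}$ in the $x_1$ direction and the tightness of $s+1/p'>0$ are indispensable.
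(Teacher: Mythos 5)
Your argument is correct, and it follows the paper's overall architecture --- the same paraproduct splitting, with the two off-diagonal pieces handled exactly as in \eqref{Pi1} and \eqref{Pi3} (H\"older plus \eqref{forYoung}, with $s<0$ used only to sum $\|\tilde S^{k-2}f\|_{L_p}$) --- but your treatment of the diagonal piece $\Pi_2$ takes a genuinely different route. The paper works on the function side: it introduces the mixed norm $A(p,p_1,\cdot)$ of \eqref{defA}, applies the one-dimensional Nikol'skij inequality \eqref{Nik} in $x_1$ to the outer projection for an auxiliary exponent $p_1\in(1,p)$ chosen so that $s_1=s+1/p_1-1/p>0$, invokes a vector-valued multiplier bound on that mixed norm (the step citing \cite[Thm 2.6.3]{RS} and \cite{Fr}), and then uses H\"older with $1/p_1=1/p+1/q$ followed by a second Nikol'skij step to pass from $L_q$ to $L_{p'}$ for $g$. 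You instead dualize $\|S_m T_k\|_{L_p}$ against $\phi\in L_{p'}$ and place the entire gain $2^{m/p'}$ on the test function via the one-directional Bernstein estimate $\sup_{x_1}\|S_m\phi(x_1,\cdot)\|_{L_{p'}(\real^{d_s-1})}\le C\,2^{m/p'}\|\phi\|_{L_{p'}}$ (just H\"older against the kernel rescaled in $x_1$ alone), while extracting $\|S_kg\|_{L_{p'}(\real)}\le C2^{-k/p'}\|g\|_{B^{1/p'}_{p',\infty}(\real)}$ directly from \eqref{keyo}. Both routes use the hypothesis $s>-1+1/p$ in the same place and with the same numerology (the condition $s+1/p'>0$ makes the geometric sum over $k\ge m-c$ dictated by \eqref{f2} converge), but yours avoids the auxiliary exponents $p_1,q$ and the vector-valued multiplier theorem at the modest price of a duality step; it is a clean and somewhat more elementary alternative. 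Two small points to make explicit in a full write-up: the adjoint of $S_m$ is again $S_m$ (the kernel $\FFF^{-1}\psi_m$ is real and even), and all your operators here should be the $d_s$-dimensional ones of \eqref{lowdim}, since the lemma lives entirely in $\real^{d_s}$ --- your notation $S_k$, $\tilde S_k$ clashes with the paper's but the mathematics is unaffected.
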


\begin{remark}The  bound \eqref{bootstrap} is a special case of a much more general
result (see e.g. \cite[Cor 4.6.2.1 (40)]{RS}) which also implies that
if $g(x)=g(x_1)$ then
\begin{equation}
\label{bootstrap'}
\| f g\|_{B^t_{p,\infty}(\real^{d_s})}
\le C \| f \|_{B^t_{p,\infty}(\real^{d_s})}(\limsup_{q\to p} \| g\|_{B^{1/ q}_{ q,\infty}(\real)}+\|g\|_{L_\infty(\real)})
\mbox{ if } 0<t< \frac{1}{p}\, ,
\end{equation}
for  a constant $C$, which may depend on $p$ and $t$,  
but not on $f$ or $g$.\end{remark}

For the convenience of the reader, and as a warmup in the use of
paraproducts, we include a proof of Lemma~\ref{leboot}.

\begin{proof}[Proof of Lemma~\ref{leboot}.]
The proof  uses the decomposition $\tilde \Pi_1(f,g)+\tilde \Pi_2(f,g)+\tilde \Pi_3(f,g)$  obtained 
from \eqref{para} by replacing 
$S_k$ and $S^k$ by the $d_s$-dimensional operators
\begin{equation}
\label{lowdim}
\tilde S_k:=(\psi_k^{(d_s)})^{Op}f\, , \qquad
\tilde S^k:=\sum_{j=0}^k  (\psi_j^{(d_s)})^{Op}f=
\sum_{j=0}^k \tilde S_j f\, .
\end{equation}
The bound for the contribution of $\tilde \Pi_3(f,g)$   is easy and does not require conditions
on $s$ or $g$: Indeed, \eqref{f1}  and the Young inequality   with 
the first claim of \eqref{forYoung} imply
\begin{align}
\nonumber &\|\sum_{j=2}^\infty
\tilde S_j f \tilde S^{j-2} g \|_{B^s_{p, \infty}(\real^{d_s})}
\le C \sup_{k\ge 2}  2^{ks}\sum_{\ell=-1}^{+3} \|\tilde S_{k+\ell} f \tilde S^{k+\ell-2} g \|_{L_p(\real^{d_s})}\, .
\end{align}
We focus on the term for $\ell=0$ (the others are similar) and get
\begin{align}
\sup_{k\ge 2}  2^{ks}\|\tilde S_k f \tilde S^{k-2} g \|_{L_p(\real^{d_s})}\label{Pi3} &
\le C \sup_k 2^{ks}\|\tilde S_k f  \|_{L_p(\real^{d_s})}\sup_k \|\tilde S^k g\|_{L_\infty}\\
\nonumber &\le C \|f\|_{B^s_{p, \infty}(\real^{d_s})}\| g\|_{L_\infty}\, ,
\end{align}
where we used the H\"older inequality and then the Young inequality, together with 
the second claim of \eqref{forYoung}. 

For $\tilde \Pi_1(f,g)$, we do not require any condition on $g$, and
the condition on $s$ is limited to $s<0$: Indeed,  exploiting again \eqref{f1}, we get
\begin{align}
\nonumber \|\sum_{j=2}^\infty
\tilde S^{j-2} f \tilde S_j g \|_{B^s_{p, \infty}(\real^{d_s})}
&\le C \sup_{k\ge 2}  2^{ks}\sum_{\ell=-1}^{+1}\|\tilde S^{k+\ell-2} f \tilde S_{k+\ell} g \|_{L_p(\real^{d_s})}\, .
\end{align}
Focusing again on the terms for $\ell=0$, we find
\begin{align}
\sup_{k\ge 2}  2^{ks}\|\tilde S^{k-2} f \tilde S_k g \|_{L_p(\real^{d_s})}
\nonumber &\le C \sup_k 2^{ks}\|\sum_{j=0}^{k-2} \tilde S_j f  \|_{L_p(\real^{d_s})}\sup_k \|\tilde S_k g\|_{L_\infty}\\
\nonumber &\le C \sup_k \bigl (\sum_{j=0}^{k-2}2^{(k-j)s} \bigr )\, \sup_j 2^{js}\| \tilde S_j f  \|_{L_p(\real^{d_s})} 
\| g\|_{L_\infty}\\
\label{Pi1} &\le C \|f\|_{B^s_{p, \infty}(\real^{d_s})}\| g\|_{L_\infty}\, , 
\end{align}
where we used the H\"older inequality and then the Young inequality, together with 
the first claim of \eqref{forYoung}.

\smallskip
The computation for $\tilde \Pi_2(f,g)$ is trickier and will use the assumption $s> -1+1/p$ together with
  the Nikol'skij inequality \eqref{Nik}.
For $\ell \in \{0, \pm 1\}$,  by \eqref{f2},
we get
\begin{align}\label{dontforget0}
\|\sum_{j=0}^\infty
\tilde S_{j+\ell} f \tilde S_j g \|_{B^s_{p, \infty}(\real^{d_s})}
\le 
C \sum_{j=0}^{\infty} \sup_{k\ge 0} 2^{ks}
  \|\tilde S_k(
\tilde S_{k+j+\ell} f \tilde S_{k+j} g) \|_{L_p(\real^{d_s})}\, .
\end{align}
In the sequel, we consider the terms with $\ell=0$ (the other terms are almost identical).
Setting $y=(x_2, \ldots, x_{d_s})$ and applying the one-dimensional Nikol'skij inequality 
\eqref{Nik}
for $1<p_1 < p$, we have, for any function $\upsilon$, 
\begin{align}
\label{!!}2^{ks}\|
\tilde S_{k}\upsilon \|_{L_p(\real^{d_s})}&= \biggl ( \int \biggl [\bigl ( \int 2^{k s p} |\tilde S_k \upsilon(x_1,y)|^{p}  \D x_1\bigr )^{1/p}\biggr ]^{p}
\D y\biggr) ^{1/p}\\
\nonumber &\le
\biggl ( \int \biggl [\bigl ( \int 2^{k (s+\frac{1}{p_1}-\frac{1}{p}) p_1} |\tilde S_k \upsilon (x_1,y)|^{p_1}  \D x_1\bigr )^{1/p_1}\biggr ]^{p}
\D y\biggr) ^{1/p}\\
\nonumber &=2^{k (s+\frac{1}{p_1}-\frac{1}{p})} A(p,p_1,\tilde S_{k}\upsilon)\, ,
\end{align}
where
\begin{equation}\label{defA}
A(p,p_1,\tilde S_{k}\upsilon)=\biggl ( \int \biggl [\bigl ( \int  |\tilde S_{k} \upsilon(x_1,y)|^{p_1}  \D x_1\bigr )^{1/p_1}\biggr ]^{p}
\D y\biggr) ^{1/p}\, .
\end{equation}
Since $s>- 1+1/p$, we may choose
$p_1\in (1,p)$ close enough to $1$ so that
\begin{equation}\label{defs}
s_1=s+ \frac{1}{p_1}-\frac{1}{p} >0 \, .
\end{equation}
Then, the right-hand side of \eqref{dontforget0} can be bounded as follows, using \eqref{!!},
\begin{align}\label{dontforget}
&\sum_{j=0}^{\infty}  
\sup_{k\ge 0} 
 2^{ks}\|\tilde S_k(
\tilde S_{k+j} f \tilde S_{k+j }g) \|_{L_p}
 \le \sum_{j=0}^{\infty}    \sup_k
 2^{k s_1} A(p,p_1,\tilde S_{k}(\tilde S_{k+j} f \tilde S_{k+j }g))\\
 \nonumber &\qquad\qquad\qquad \le   \bigl ( \sum_{j=0}^{\infty}  2^{-js_1}\bigr )
 \sup_{k,j}
 2^{(k+j) s_1} A(p,p_1,\tilde S_{k}(\tilde S_{k+j} f \tilde S_{k+j }g)) \\ 
\nonumber &\qquad \qquad\qquad\le C  \sup_{m\ge 0} 2^{ms_1} A(p,p_1,\tilde S_{m} f \tilde S_{m }g)\, .
\end{align}
In the last line we used \eqref{f2} to exploit  that there exists   $C<\infty$, depending on $p>1$ and $p_1>1$,  so that, for any  $\{\upsilon_k\}_{k \ge 0}$ so that 
$\supp \, (\FFF(\upsilon_k))\subset \{|\xi|\le 5 \cdot 2^{k}\}$,
\begin{align*}
A(p,p_1,\tilde S_{k}(\upsilon_{k+j}))&\le
C A(p,p_1,\upsilon_{k+j})\, ,\quad\forall
k\ge 0\, , \, j\ge 0 \, .
\end{align*}
(The above basically follows from
Young's inequality, see \cite[Thm 2.6.3, (5), p. 96]{RS}, noting that $p>1$ and $p_1>1$, so that
$\max\{0, 1/p-1, 1/p_1-1\}=0$, and noting that $f_j$ in the right-hand side of \cite[(5), p. 96]{RS}
should be replaced by $f_{j+\ell}$, see \cite[Thm 2.4.1.(II) and (III)]{Fr}.)

Next, recalling that $g$ only depends on $x_1$, using \eqref{keyo}, and applying
 the H\"older inequality in $dx_1$ for $1/p_1 =1/p+1/q$, we find $C$ so that for all $k$
\begin{align}
\nonumber A(p,p_1,\tilde S_{k} f \tilde S_{k}g)&=\biggl ( \int \biggl [\bigl ( \int  |\tilde S_k g(x_1)\tilde S_k f(x_1,y)|^{p_1}  \D x_1\bigr )^{1/p_1}\biggr ]^{p} \D y\biggr) ^{1/p}\\
\nonumber &\le C \biggl ( \int    \biggl [\bigl (\int | \tilde S_k g(x_1)|^{q} \D x_1\bigr )^{1/q}
 \bigl (\int | \tilde S_k f(x_1,y)|^{p}  \D x_1\bigr )^{1/p}\biggr ]^{p}
\D y\biggr) ^{1/p}\\
\nonumber &\le C
\bigl ( \int    | \tilde S_{k} g(x_1)|^{q} \D x_1 \bigr)^{1/q}
\biggl ( \int \biggl [\bigl ( \int  |\tilde S_k f(x_1,y)|^{p}  \D x_1\bigr )^{1/p}\biggr ]^{p}
\D y\biggr) ^{1/p}\\
\nonumber &= C \| \tilde S_{k} g\|_{L_q(\real)}\|\tilde S_k f\|_{L_p(\real^{d_s})} \, .
\end{align}
Note that \eqref{keyo} implies $\tilde S_{k} g=( \psi^{(1)}_k)^{Op}g$.
Finally, putting together \eqref{dontforget0} and \eqref{dontforget}, we find,
recalling \eqref{defs} and \eqref{p'},
\begin{align}
\nonumber \|\sum_{j=0}^\infty
\tilde S_{j} f \tilde S_j g \|_{B^s_{p, \infty}(\real^{d_s})}
&\le C  \sup_{k\ge 0} \bigl (2^{ks_1} \|\tilde S_{k} g\|_{L_q(\real)}\|\tilde S_k f\|_{L_p(\real^{d_s})}\bigr ) \\
\nonumber &\le C  \sup_{k\ge 0} \bigl ( 2^{k\frac{1}{q}} \|\tilde S_{k} g\|_{L_q(\real)}\bigr ) 
\sup_{k \ge 0} \bigl ( 2^{ks} \|\tilde S_k f\|_{L_p(\real^{d_s})}\bigr ) \\
\label{Nikk} &\le  C \sup_{k\ge 0}
\bigl (  2^{k\frac{1}{q}} 2^{k(\frac{1}{p'}-\frac{1}{q})} \| \tilde S_{k} g\|_{L_{p'}(\real)} 
\bigr ) \| f \|_{B^s_{p,\infty}(\real^{d_s})}\\
\label{Pi2} &\le  C  \| g\|_{B^{1/p'}_{p'}(\real)} \| f \|_{B^s_{p,\infty}(\real^{d_s})}\, ,
\end{align}
where we used the one-dimensional Nikol'skij inequality  for $q>p'>1$ in \eqref{Nikk}
(recalling \eqref{f2}).
Together, \eqref{Pi3}, \eqref{Pi1}, and \eqref{Pi2} give \eqref{bootstrap}.
\end{proof}


\subsection{Proof of Theorem~\ref{propp}}

To prove the theorem, we need one last lemma.
The point is that
if $\Gamma$ is horizontal, i.e., $\Gamma=\real^{d_s}\times \{0\}$, then \eqref{orthog} implies
\begin{equation}
\label{deco}
\tilde S_{k_s} ((S^{k} \varphi) \circ \pi_\Gamma^{-1}|_{\real^{d_s}}) \equiv 0
\, , \quad \forall k_s >  k+2  \ge 2 \, .
\end{equation}
If $\Gamma$ is an arbitrary admissible stable leaf, then we must work harder.
To state the bound replacing the trivial decoupling property \eqref{deco}, we
need notation: 
Defining  $b:\real^{d}\to 
\real_+$ by
$b(x)=1$ if $\|x\|\le 1$ and $b(x)=\|x\|^{-d-1}$ if $\|x\|> 1$,
we set 
$
 b_{k}(x)=2^{d k} \cdot b(2^{k} x)$ for $k\ge 0$.
(Note  that  $\|b_{k}\|_{L_1(\real^d)}=\|b\|_{L_1(\real^d)}<\infty$.)

\begin{lemma}[Decoupled wave packets in $\real^d$ and the cotangent space of
$\Gamma$]\label{sublemma4.4}
Fix a compact set  $K\subset \real^d$. There exists $C_0 \in [2, \infty)$
(depending on $\CC_\FF$, $K$) so that for any $k_s > k +C_0 \ge C_0$ and any 
$\Gamma \in \FF$,
the
kernel $V(x,y)$ defined by $\tilde S_{k_s} ((S^{k}  \varphi) \circ \pi_\Gamma^{-1}) (x)=\int_{y\in  \real^d} V(x,y) \varphi(y) \D y$ for $x\in \real^{d_s}$
and   $\varphi$ supported in $\Omega$  satisfies\footnote{The proof shows that the same bound holds for the kernel associated to
$\tilde S_{k_s} ((S_{k}  \varphi) \circ \pi_\Gamma^{-1}) (x)$.}
\begin{equation}\label{convol}
| V(x,y)|\le C_0 2^{-k_s r} 
    b_{k}(\pi_\Gamma^{-1} (x)-y) \, ,
\quad \forall x \in \real^{d_s} \, , \forall y \in \real^{d}\, .
\end{equation}
\end{lemma}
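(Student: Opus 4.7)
The plan is to express $V(x,y)$ explicitly using the convolution representations of $\tilde S_{k_s}$ and $S^k$. Since $\sum_{j\le k}\FFF^{-1}\psi^{(d)}_j = 2^{dk}\FFF^{-1}\chi(2^k\,\cdot)$, the kernel of $S^k$ is $\check\chi_k(w) := 2^{dk}(\FFF^{-1}\chi)(2^k w)$, so setting $\Phi:=\pi_\Gamma^{-1}$ one has
\[
V(x,y) = \int_{\real^{d_s}} (\FFF^{-1}\psi_{k_s}^{(d_s)})(x-z)\,\check\chi_k(\Phi(z)-y)\,\D z\,.
\]

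The core of the argument is cancellation by vanishing moments: because $\psi_{k_s}^{(d_s)}$ is $C^\infty$ and vanishes on a neighbourhood of the origin (its support lies in $\{2^{k_s-1}\le |\eta_s|\le 2^{k_s+1}\}$ for $k_s\ge 1$), every moment of $\FFF^{-1}\psi_{k_s}^{(d_s)}$ vanishes:
\[
\int u^\alpha (\FFF^{-1}\psi_{k_s}^{(d_s)})(u)\,\D u = 0 \qquad \text{for every multi-index } \alpha\,.
\]
Changing variables $u = x-z$ and Taylor-expanding $u\mapsto \check\chi_k(\Phi(x-u)-y)$ around $u=0$ up to order $\lfloor r\rfloor$, every polynomial monomial in $u$ is annihilated upon integration against $\FFF^{-1}\psi_{k_s}^{(d_s)}$, so only the Taylor remainder contributes to $V(x,y)$.

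To control the remainder, I would combine the chain-rule bound on derivatives of $\check\chi_k\circ\Phi$ with the Schwartz decay of $\FFF^{-1}\chi$. Each $u$-derivative produces factors controlled by $C_\FF$ (from $D\Phi$) times derivatives of $\check\chi_k$, and $|\partial_w^\beta\check\chi_k(w)|\le C_{\beta,N}\,2^{(d+|\beta|)k}\langle 2^k w\rangle^{-N}$ for any $N$; taking $N\ge d+1$ dominates the envelope by a multiple of $b(2^k w)$. The scale-separation condition $k_s-k\ge C_0$, with $C_0$ chosen large (in terms of $C_\FF$ and $K$), ensures that on the effective support $|u|\lesssim 2^{-k_s}$ of $\FFF^{-1}\psi_{k_s}^{(d_s)}$ one has $2^k|\Phi(x-u)-\Phi(x)|\lesssim 2^{k-k_s}C_\FF\ll 1$, so the Schwartz envelope at $\Phi(x-u)-y$ is comparable up to multiplicative constants to the envelope at $\Phi(x)-y$, producing the factor $b_k(\pi_\Gamma^{-1}(x)-y)$. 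The tail $|u|\gg 2^{-k_s}$ contributes negligibly, by the rapid decay of $\FFF^{-1}\psi_{k_s}^{(d_s)}$ together with the uniform bound $|\check\chi_k|\le C\,2^{dk}$.

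The hard part will be the delicate bookkeeping for non-integer $r>1$: the Taylor remainder has to be set up in a fractional form using the H\"older continuity of $\partial^{\lfloor r\rfloor}\Phi$, and the $|u|^r$ gain from integrating against $\FFF^{-1}\psi_{k_s}^{(d_s)}$ must be coordinated with the $2^k$-growth from differentiating $\check\chi_k$, all while the spatial envelope $b_k(\pi_\Gamma^{-1}(x)-y)$ is simultaneously preserved, in order to yield the claimed $2^{-k_s r}$ factor with the correct dependence on $k$. The footnoted variant with $S_k$ replacing $S^k$ follows from the same scheme, since $S_k$ also has a convolution kernel band-limited at scale $2^k$ with the same Schwartz decay.
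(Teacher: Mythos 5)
Your reduction to the kernel formula $V(x,y)=\int(\FFF^{-1}\psi^{(d_s)}_{k_s})(x-z)\,\check\chi_k(\pi_\Gamma^{-1}(z)-y)\,\D z$ is correct, but the vanishing-moment/Taylor-remainder scheme cannot produce the stated factor $2^{-k_s r}$: it produces $2^{-(k_s-k)r}$. Indeed, each of the (up to $r$) derivatives of $u\mapsto \check\chi_k(\pi_\Gamma^{-1}(x-u)-y)$ costs a factor $C_\FF 2^{k}$, while the moment integral $\int |u|^r\,|\FFF^{-1}\psi^{(d_s)}_{k_s}(u)|\,\D u$ only recoups $2^{-k_s r}$; the net is $2^{(k-k_s)r}\,b_k(\pi_\Gamma^{-1}(x)-y)$, i.e.\ a loss of $2^{kr}$ against \eqref{convol}. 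You flag the coordination of these two factors as ``delicate bookkeeping,'' but it is the crux, not bookkeeping, and your scheme cannot close it: the estimate $\|\tilde S_{k_s}h\|\lesssim 2^{-k_s r}\|h\|_{C^r}$ with $\|h\|_{C^r}\sim 2^{kr}$ is essentially sharp for a general function of that regularity. The extra gain $2^{-kr}$ comes from the specific structure of $h$ as a function band-limited at scale $2^k$ composed with a $C^r$ graph map --- a structure the pure moment argument does not see (note that for \emph{affine} $\Gamma$ the kernel vanishes identically by disjointness of Fourier supports, whereas your Taylor remainder would still only give $2^{(k-k_s)r}$). This loss is fatal for the application: in \eqref{wow} the factor $2^{-kr}$ is exactly what absorbs $2^{k(t-s)}$ under the hypothesis $t-s<r-1$; with only $2^{-(k_s-k)r}$ the sum over $j=k_s$ leaves an unbounded $2^{k(t-s+\delta)}$.

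The paper instead keeps $V$ as an oscillatory integral in $(z,\eta,\eta_s)$ and integrates by parts $r$ times in $z$ (fractionally if $r\notin\integer$). On the relevant supports the $z$-phase gradient $D\pi_\Gamma^{-1}(z)^{tr}\eta-\eta_s$ has size $\gtrsim 2^{k_s}$ (since $|\eta|\le 2^{k+1}$, $|\eta_s|\sim 2^{k_s}$ and $k_s>k+C_0$ with $C_0$ large in terms of $\CC_\FF$), while each differentiation of the resulting amplitude costs only $O(2^{k})$ (second derivatives of $\pi_\Gamma^{-1}$ contracted with $\eta$). Hence each step gains $2^{k-2k_s}\le 2^{-k_s}2^{-C_0}$, and $r$ steps give $2^{-k_s r}$ with room to spare; the spatial envelope $b_k(\pi_\Gamma^{-1}(x)-y)$ then comes from further integrations by parts in the frequency variables together with the triangle-inequality localization described at the end of the paper's proof. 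If you want to stay on the physical side, you would have to Taylor-expand $\pi_\Gamma^{-1}$ (not the composed kernel) around $x$, use the exact vanishing for its affine part, and estimate the contribution of the nonlinear remainder in the phase --- which amounts to redoing the non-stationary-phase argument.
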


The lemma implies that $\int_{y\in  \real^d} V(x,y) \varphi(y) \D y$
is bounded by a convolution with a function in $L_1(\real^d)$, for which  \eqref{magic5} holds.

\begin{proof}
The kernel $V(x,y)$ is given by the formula\footnote{Strictly speaking, we must first integrate by parts $d_s+1$
times in the kernel $\int \E^{\I (\pi_\Gamma^{-1} (z)- y)\eta} 
\sum_{j=0}^k \psi_{j}(\eta) \D \eta$ of $(S^k\varphi)\circ \Pi_\Gamma^{-1}(z)$ for
$d(z,K)> \epsilon$, to get an element of $L_1(dz)$.}
\begin{align*}
\frac{1}{(2\pi)^{d_s+d}}
 \int_{z\in \real^{d_s}}\int_{\eta \in \real^{d}} \int_{\eta_s \in \real^{d_s}}
\E^{\I (\pi_\Gamma^{-1} (z)- y)\eta} \E^{\I (x- z)\eta_s} 
\sum_{j=0}^k \psi_{j}(\eta)
\psi_{k_s}^{(d_s)}(\eta_s) \D \eta_s \D \eta \D z\, .
\end{align*}

As a warmup, let us prove \eqref{deco} if $\Gamma$  is horizontal or,
more generally, affine: Letting 
 $\eta=(\eta_-,\eta_+)$ with $\eta_-=\pi_-(\eta)\in \real^{d_s}$, 
we have $\pi_\Gamma^{-1} (z)=(z,A(z)+A_0)$ with $A_0\in \real^{d_u}$
and $A:\real^{d_s}\to \real^{d_u}$ 
linear ($A\equiv 0$ if $\Gamma$ is horizontal), so that (using like in
\eqref{keyo} that $\FFF^{-1}(1)$ is the
Dirac at $0$), $V(x,y)$ can be rewritten as
\begin{align*}
& \frac{1}{(2\pi)^{d+d_s}}
 \int_{\real^{2d_s+d}}\E^{-\I  y\eta}
\E^{\I  x\eta_s} \E^{\I  A_0 \eta_+}
 \E^{\I z (-\eta_s+\eta_- + A^{tr}\eta_+)}  
\sum_{j=0}^k \psi_{j}(\eta)
\psi_{k_s}^{(d_s)}(\eta_s) \D \eta_s \D \eta \D z\\
&=
\frac{1}{(2\pi)^{d}}
 \int_{\real^{d}}\E^{-\I  y\eta}
\E^{\I  x(\eta_-+A^{tr}\eta_+)} \E^{\I  A_0 \eta_+}
\sum_{j=0}^k \psi_{j}(\eta)
\psi_{k_s}^{(d_s)}(\eta_-+A^{tr}\eta_+)  \D \eta \equiv 0
\, ,
\end{align*}
since $\psi_{j}(\eta)$ and
$\psi_{k_s}^{(d_s)}(\eta_-+A^{tr}\eta_+)$ have disjoint supports
if $k_s>k+C_0$, where $C_0\ge 2$ depends on $\|A\|\le \CC_\FF$.

More generally,  $\Gamma\in \FF$ is the graph of a $C^r$ map $\gamma$ (with $\|\gamma\|_{C^r}\le \CC_\FF$), i.e.,
$\pi_\Gamma^{-1}(z)=(z,\gamma(z))$ for $z \in \real^{d_s}$. 
The lemma is thus obtained
integrating by parts $r$ times 
(in the sense of \cite[App.~C]{Baladijoel} if $r$ is not an integer)
with respect to $z$ in the kernel $V(x,y)$, using \eqref{betagood},
and proceeding as in the end of the proof of \cite[Lemma 2.34]{Ba}, mutatis mutandis
(using that $\|y-\pi_\Gamma^{-1}(x)\|>2^{-k}$ 
implies that either $\|y-\pi_\Gamma^{-1}(z)\|>2^{-k+1}$
or $\|\pi_\Gamma^{-1}(z)-\pi_\Gamma^{-1}(x)\|>2^{-k+1}$, choosing $C_0$
depending on $\CC_\FF$,
so  that  $\|\pi_\Gamma^{-1}(z)-\pi_\Gamma^{-1}(x)\|>2^{-k+1}$ 
implies $\|z-x\|\ge 2^{-k+1}/C_0$).
\end{proof}

\begin{proof}[Proof of Theorem~\ref{propp}]
If $G$ is a rotation about $0\in \real ^d$  then,  since
$\psi_n \circ G^{-1}=\psi_n$, we have
 $\psi_n^{Op}(\tilde  \varphi \circ G)= ((\psi_n\circ G^{tr})^{Op}\tilde \varphi)\circ G=(\psi_n^{Op}\tilde \varphi)\circ G$
 (use $G^{tr}=G^{-1}$),
and thus
  $\| \tilde \varphi \circ G\|_{\UU_p^{\cone_+,t,s}} =
\|\tilde \varphi \|_{\UU_p^{G(\cone_+),t,s}}$ for all $\varphi$ (use $G \circ \pi_\Gamma^{-1}= \pi_{G(\Gamma)}^{-1}$).
It thus suffices to show \eqref{BMP} for 
$\Lambda =\{ x \in \real^d \mid x_1>0 \}$. 
Indeed, the assumption on $u_{\tilde \Lambda}$  implies that  
the  rotation $G$ satisfying $1_{\tilde \Lambda}  \varphi =(1_{\Lambda} (\varphi \circ G^{-1})) \circ G$
is such that $G(\cone_+)$ is still an unstable cone, i.e., $\real^{d_s} \times \{0\}$ is included in $(\real^d \setminus G(\cone_+)) \cup \{0\}$ (note that $G(u_{\tilde \Lambda})=(1,0, \ldots ,0)$, and consider the
limiting case $u_{\tilde \Lambda}\to \partial \cone_+$).

Next, since $\varphi$ is supported in $K$, 
we can replace the half-space $\Lambda$
by a strip $0<x_1<B$, still denoted $\Lambda$, and whose characteristic function  $1_\Lambda(x)$   still only depends on $x_1\in \real$. Without loss of generality, we may assume that $B=1$. 

Our starting point is then the decomposition \eqref{para} applied to 
$\upsilon=1_\Lambda$. 
We consider first the term $\Pi_3(\varphi ,1_\Lambda )$. We will bootstrap
from  Lemma~\ref{leboot}. Set
\begin{equation}
\label{gGamma}
1_\Lambda^{k-2,\Gamma}(x_-)= (S^{k-2} 1_\Lambda)(x_-, \gamma(x_-))
=\sum_{j=0}^{k-2} (\FFF^{-1} \psi_j *1_\Lambda) (x_-,\gamma(x_-))\, .
\end{equation}
Then $1_\Lambda^{k-2,\Gamma}(x_-)$ is a function of $x_1$ alone (recalling \eqref{keyo}), and the leafwise Young inequality \eqref{magic5}, 
together with 
the second claim of \eqref{forYoung} 
 and the fact that $\| 1_\Lambda \|_{B^{1/t}_{t,\infty}(\real)}<\infty$
(for  any $1<t<\infty$, see e.g. \cite[Lemma 2.3.1/3(ii), Lemma 2.3.5]{RS}), 
give that both $\| 1_\Lambda^{k-2,\Gamma}\|_{B^{1/ p'}_{p',\infty}(\real)}$
and $\|1_\Lambda^{k-2,\Gamma}\|_{L_\infty(\real)}$ are finite, uniformly in $\Gamma$ and $k$.
Next, by \eqref{f1}, \eqref{magic5}, and \eqref{bootstrap}, there exists
a constant $C$ so that for  any $\ell \ge 0$, since $-1+1/p<s < 0$,
\begin{align*}
2^{\ell t} \| S_\ell(\Pi_3(\varphi ,1_\Lambda ))\|^s_{p,\Gamma}&\le
2^{\ell t} \sum_{k=\ell-1}^{\ell+3} \|S_k \varphi \cdot S^{k-2} 1_\Lambda \|^s_{p,\Gamma}\\
&\le
2^{\ell t} \sum_{k=\ell-1}^{\ell+3} \|S_k \varphi \|^s_{p,\Gamma} 
(\| 1_\Lambda^{k-2,\Gamma}\|_{B^{1/p'}_{ p',\infty}(\real)}+\|1_\Lambda^{k-2,\Gamma}\|_{L_\infty(\real)}) \\
&\le C \sup_n
2^{n t}  \|S_n \varphi \|^s_{p,\Gamma}\le  C \|\varphi \|_{\UU^{\cone, t,s}_p}\, , 
\end{align*}
where we  used \eqref{bootstrap} from Lemma~\ref{leboot}
for $f(x_-)= S_k \varphi (x_-,\gamma(x_-))$
with $\gamma=\gamma(\Gamma)$ from the proof of Sublemma~\ref{sublemma4.4},
and $g(x_-)=1_\Lambda^{k-2,\Gamma}(x_-)$.  This concludes the bound for
$\Pi_3(\varphi, 1_\Lambda)$, and
we move to $\Pi_2(\varphi,1_\Lambda)$. Setting
\begin{equation}
\label{gGamma'}
1_{\Lambda,k}^\Gamma(x_-)= (S_k 1_\Lambda)(x_-, \gamma(x_-))
=(\FFF^{-1} \psi_k *1_\Lambda) (x_-,\gamma(x_-))\, ,
\end{equation}
we have that $1_{\Lambda,k}^\Gamma(x_-)=1_{\Lambda,k}^\Gamma(x_1)$, and also,  recalling
\eqref{needit}, the 
leafwise Young inequality
\eqref{magic5},   together with the first claim of \eqref{forYoung},
we find
\begin{equation}\label{arefinite}
\sup_{k,\Gamma}\| 1_{\Lambda,k}^\Gamma\|_{B^{1/p'}_{p',\infty}(\real)}<\infty
\, ,\qquad
\sup_{k,\Gamma}\| 1_{\Lambda,k}^\Gamma\|_{L_\infty(\real)}<\infty \, .
\end{equation}
Thus, using \eqref{f2}, and applying \eqref{bootstrap}
from Lemma~\ref{leboot} again, we find, since $t>0$,
\begin{align*}
2^{\ell t}& \| S_\ell(\Pi_2(\varphi ,1_\Lambda ))\|^s_{p,\Gamma}\le
2^{\ell t} 3\sum_{k\ge \ell-1} \|S_k \varphi \cdot S_{k} 1_\Lambda \|^s_{p,\Gamma}\\
&\qquad \le 3 \sup_k
2^{k t}\|S_k \varphi \|^s_{p,\Gamma} 
(\| 1_{\Lambda,k}^\Gamma\|_{B^{1/p'}_{ p',\infty}(\real)}+\|1_{\Lambda,k}^\Gamma\|_{L_\infty(\real)})
 \sum_{k\ge \ell-1}  2^{(\ell-k)t} \\
&\qquad \le C \sup_k
2^{k t}  \|S_k \varphi \|^s_{p,\Gamma}\le  C \|\varphi \|_{\UU^{\cone_+,t,s}_p}\, , \qquad
\forall \ell \ge 0 \, . 
\end{align*}

It remains to bound the contribution of $\Pi_1(\varphi,1_\Lambda)$. This is the trickiest
estimate. It will use Lemma~\ref{sublemma4.4} and our assumption 
$t-(r-1)<s<-t<0$.
For  any $\ell \ge 0$, we have, using again \eqref{magic5}, \eqref{f1}, and \eqref{forYoung},
\begin{align}\label{cliffh}
2^{\ell t}& \| \psi_\ell^{Op}(\Pi_1(\varphi ,1_\Lambda ))\|^s_{p,\Gamma}\le
 \sum_{k=\ell-1}^{\ell+3}2^{\ell t}  \|S^{k-2} \varphi \cdot S_k 1_\Lambda \|^s_{p,\Gamma}\, .
\end{align}
We may focus on the term $k=\ell$, as the others are almost identical. We will use
the paraproduct decomposition  $\tilde \Pi_1+\tilde \Pi_2+\tilde \Pi_3$ and the operators
$\tilde S_j$ and $\tilde S^j$
(see \eqref{lowdim}). Put $(S^{k-2} \varphi)^\Gamma=(S^{k-2} \varphi)\circ \pi_\Gamma^{-1}$.
 By \eqref{keyo} and \eqref{f1}, we have
\begin{align}
 \label{triv?}2^{kt}\|S^{k-2} \varphi \cdot S_k 1_\Lambda \|^s_{p,\Gamma}&\le
 \sum_{i=1}^2  2^{kt} \|\tilde \Pi_i((S^{k-2} \varphi)^ \Gamma , 1_{\Lambda,k}^\Gamma )\|_{B^s_{p,\infty}} 
 + 2^{kt} \RR^\Gamma_{k,s, p,\Lambda}(\varphi)
 \\
 \label{triv2} &\qquad +2^{kt} \sum_{m=k-1}^{k+1}\sum_{j=m+2}^{m+2+C_0} \|\tilde S_j((S^{k-2} \varphi)^\Gamma) (\tilde S_m  1^\Gamma_{\Lambda,k})\|_{
 B^s_{p,\infty}}\, ,
\end{align}
taking $C_0\ge 2$ from Lemma~\ref{sublemma4.4}, using \eqref{orthog}, and setting
$$
\RR^\Gamma_{k,s, p,\Lambda}(\varphi)=\sum_{m=k-1}^{k+1}\,\,
\sum_{j= m+2+C_0+1}^\infty \|\tilde S_j((S^{k-2} \varphi)^\Gamma) (\tilde S_m  1^\Gamma_{\Lambda,k})\|_{
 B^s_{p,\infty}} \, .
$$
Lemma~\ref{leboot} and the Young inequality (thrice) give $C$ so that for all $j, k, m,$ and $\Gamma$
\begin{align}
\nonumber &\|\tilde S_j((S^{k-2} \varphi)^\Gamma) (\tilde S_m  1^\Gamma_{\Lambda,k})\|_{
 B^s_{p,\infty}(\real^{d_s})}\\
\nonumber &\qquad\qquad\qquad\le C \| \tilde S_j((S^{k-2} \varphi)^\Gamma)\|_{B^s_{p,\infty}(\real^{d_s})}
 (\|  1^\Gamma_{\Lambda,k}\|_{B^{1/p'}_{p',\infty}(\real)}
 +\| 1^\Gamma_{\Lambda,k}\|_{L_{\infty}(\real)})
 \\
\label{rem} &\qquad\qquad\qquad\le C \| \tilde S_j((S^{k-2} \varphi)^\Gamma)\|_{B^s_{p,\infty}(\real^{d_s})}
 \le  C \| (S^{k-2} \varphi)^\Gamma\|_{B^s_{p,\infty}(\real^{d_s})}
 \, ,
\end{align}
where we applied \eqref{arefinite} in the second inequality.
Thus, Lemma~\ref{sublemma4.4} 
and the leafwise\footnote{See \S4 of Corrections and complements to \cite {Baladijoel} for the factor $2^{k(-s+\delta)}$.} Young inequality  \eqref{magic5}  
applied to $k_s=j \ge k+2+C_0$ gives $k_0\ge C_0$ so that for any $\delta\in (0,1)$ (recalling $0<t-s<r-1<r-\delta$)
\begin{align}
\nonumber
\sup_{k\ge k_0,\Gamma} 2^{kt} \RR^\Gamma_{k,s,p,\Lambda}(\varphi)
&\le 3 C_0 C \sup_{k,\Gamma} 2^{k(t-r-s+\delta)} \bigl (\sum_{j=k+2+C_0}^\infty 2^{-(j-k)r} \bigr )\, 
\|S^{k -2}\varphi \|^s_{p,\Gamma} \\
\label{wow} & \le 3 C_0 C \|\varphi\|_{\UU^{\cone_+,t,s}_p}\, .
\end{align}
Using again \eqref{rem}, the finite double sum in \eqref{triv2} is  bounded by  $(C_0+4) C \|\varphi\|_{\UU^{\cone_+,t,s}_p}$.

For the contribution of
$\tilde \Pi_1$ in \eqref{triv?}, using again \eqref{keyo} and \eqref{f1}, we find 
\begin{align*}
  2^{k t} & \|\tilde \Pi_1((S^{k-2} \varphi )^\Gamma,  1_{\Lambda,k}^\Gamma) \|_{B^s_{p,\infty}(\real^{d_s})}\le 2^{k t} \sum_{n=k-1}^{k+1}
  \|(\tilde S^{n-2} (S^{k-2} \varphi)^\Gamma) \cdot \tilde S_n(1_{\Lambda,k}^\Gamma)) \|_{B^s_{p,\infty}}
  \, .
  \end{align*}
Setting $(S_j \varphi)^\Gamma=(S_j \varphi)\circ \pi_\Gamma^{-1}$,
we bound the term for $n=k$ above\footnote{The other terms are similar.}
 by the sum  of
$$
2^{kt}  \sum_{\ell=-1}^1 2^{(k+\ell)s} \| 
[\sum_{j=0}^{k-2}\sum_{m=j+C_0}^{k-2} \tilde S_{k+\ell} (\tilde S_m  (S_j  \varphi)^\Gamma) ]\cdot
\tilde S_k(1_{\Lambda,k}^\Gamma)\|_{L_{p}(\real^{d_s})}\, ,
$$
(which
can be handled as in  \eqref{wow}, by Lemma~\ref{sublemma4.4}),
and, 
\begin{align*}
&2^{kt}  \sum_{\ell=-1}^1 2^{(k+\ell)s}  \| 
[\sum_{j=0}^{k-2}\, \,\sum_{m=0}^{j+C_0-1} \tilde S_{k+\ell} (\tilde S_m  (S_j  \varphi)^\Gamma) ]\cdot
\tilde S_k(1_{\Lambda,k}^\Gamma)\|_{L_{p}(\real^{d_s})}\\
& \le  (\sup_{0\le j\le k-2} \sum_{m=0}^{j+C_0-1}
2^{(j-m)s} )\\
&\qquad\quad \cdot
2^{kt} \sum_{\ell=-1}^1 \sum_{j=0}^{k-2}
\sup_{0\le m< j+C_0} 2^{(k+\ell-j+m)s} 
\| [\tilde S_{k+\ell}(\tilde S_m  (S_j  \varphi)^\Gamma) ]\cdot \tilde S_k(1_{\Lambda,k}^\Gamma)\|_{L_{p}}\\
&\le C 2^{C_0|s|} \sum_{j=0}^{k-2}
\sup_{\substack{0\le m< j+C_0\\-1\le \ell \le 1}} 2^{(k+\ell-j)(t+s)}  2^{ms} 2^{jt} 
\| \tilde S_{k+\ell} ([\tilde S_m  (S_j  \varphi)^\Gamma ]\cdot \tilde S_k(1_{\Lambda,k}^\Gamma))\|_{L_p(\real^{d_s})}\, , 
\end{align*}
using that $s<0$.
Now, since $s+t<0$, we get, using the Young inequality,
\begin{align*}
&\sum_{j=0}^{k-2}
\sup_ {\substack{0\le m< j+C_0\\-1\le \ell \le 1}}2^{(k+\ell-j)(t+s)}  2^{ms} 2^{jt} 
\| \tilde S_{k+\ell} ([\tilde S_m  (S_j  \varphi)^\Gamma ]\cdot \tilde S_k(1_{\Lambda,k}^\Gamma))\|_{L_p(\real^{d_s})}\\
&\qquad \le C \sup_m \sup_j 
 2^{ms} 2^{jt} 
\| \tilde S_m  (S_j  \varphi)^\Gamma \|_{L_p(\real^{d_s})}
\|  \tilde S_k(1_{\Lambda,k}^\Gamma)\|_{L_\infty(\real)}\\
&\qquad \le C \sup_j 2^{jt} \|S_j \varphi\|^s_{p,\Gamma}
\le C \|\varphi\|_{\UU^{\cone_+,t,s}_p}\, .
\end{align*}
Finally, using \eqref{keyo} once more, we bound the contribution of $\tilde \Pi_2$ in \eqref{triv?}:
\begin{align}
\nonumber  &2^{k t}  \|\tilde \Pi_2((S^{k-2} \varphi )^\Gamma,  1_{\Lambda,k}^\Gamma )\|_{B^s_{p,\infty}}\le
2^{kt}
\sum_{\ell=-1}^1 \|(\tilde S_{k+\ell}(S^{k-2} \varphi )^\Gamma) \cdot  \tilde S_k(1_{\Lambda,k}^\Gamma) \|_{B^s_{p,\infty}} \\
\label{dsum}&\le 2^{kt} \tilde \RR^\Gamma_{k,p,s,\Lambda}(\varphi)+
2^{kt}
\sum_{\ell=-1}^1\sum_{\tilde \ell=2}^{C_0}
\|(\tilde S_{k+\ell}(S_{k-\tilde\ell}  \varphi )^\Gamma)  \cdot \tilde S_k(1_{\Lambda,k}^\Gamma) \|_{B^s_{p,\infty}(\real^{d_s})}
\, ,
\end{align}
where 
$$2^{kt} \tilde \RR^\Gamma_{k,p,s,\Lambda}(\varphi)
=
2^{kt}
\sum_{\ell=-1}^1\sum_{\tilde \ell=C_0+1}^{k}
\|(\tilde S_{k+\ell}(S_{k-\tilde\ell}  \varphi )^\Gamma)  \cdot \tilde S_k(1_{\Lambda,k}^\Gamma) \|_{B^s_{p,\infty}(\real^{d_s})}
$$ 
can be bounded similarly as \eqref{wow}, using
Lemma~\ref{sublemma4.4}. For the remaining finite double sum in \eqref{dsum},
we focus on the contributions with $\ell=0$ and  $\tilde \ell=2$, the others being similar. Then, applying
Lemma~\ref{leboot}, we find
\begin{align*}
&\sup_{k, \Gamma} 2^{kt} \|(\tilde S_{k}(S_{k-2} \varphi )^\Gamma)  \cdot \tilde S_k(1_{\Lambda,k}^\Gamma) \|_{B^s_{p,\infty}(\real^{d_s})}
 \\
&\,\, \le \sup_{k, \Gamma} 2^{kt}
\|(\tilde S_{k}(S_{k-2} \varphi )^\Gamma) \|_{B^s_{p,\infty}(\real^{d_s})}
(\| 1_{\Lambda,k}^\Gamma \|_{B^{1/p'}_{p',\infty}(\real)} +\| 1_{\Lambda,k}^\Gamma \|_{L_{\infty}(\real)})\le C \|\varphi\|_{\UU^{\cone_+,t,s}_p}
\, ,
\end{align*} 
using  \eqref{arefinite} once more. 
This ends the proof of Theorem~\ref{propp}.
\end{proof}


\end{document}